\newtheorem{theorem}{\sc Theorem}[section]
\newtheorem{lemma}[theorem]{\sc Lemma}
\newtheorem{proposition}[theorem]{\sc Proposition}
\newtheorem{corollary}[theorem]{\sc Corollary}
\begin{document}
\title{Linear groups with almost right Engel elements}
\author{Pavel Shumyatsky }
\address{ Department of Mathematics, University of Brasilia,
Brasilia-DF, 70910-900 Brazil}
\email{pavel@unb.br}
\thanks{This research was supported by FAPDF and CNPq-Brazil}
\keywords{Linear groups; Engel condition; locally nilpotent groups}
\subjclass[2010]{20E99, 20F45, 20H20}
\begin{abstract}
Let $G$ be a linear group such that for every $g\in G$ there is a finite set ${\mathcal R}(g)$ with the property that for every $x\in G$ all sufficiently long commutators $[g,x,x,\dots,x]$ belong to ${\mathcal R}(g)$. We prove that $G$ is finite-by-hypercentral.
\end{abstract}

\maketitle
\section{Introduction}

By a linear group we understand a subgroup of $GL(m,F)$ for some field $F$ and a positive integer $m$. A group $G$ is called an \emph{Engel group} if for every $x,g\in G$ the equation $[x,g,g,\dots , g]=1$ holds, where $g$ is repeated in the commutator sufficiently many times depending on $x$ and $g$. (Throughout the paper, we use the left-normed simple commutator notation
$[a_1,a_2,a_3,\dots ,a_r]=[...[[a_1,a_2],a_3],\dots ,a_r]$.)
Of course, any locally nilpotent group is an Engel group. In some classes of groups the converse is also known to be true. For example,  a finite Engel group is nilpotent by Zorn's theorem \cite{zorn}. Gruenberg proved that linear Engel groups are locally nilpotent \cite{grue}. Wilson and Zelmanov~\cite{wi-ze} proved that profinite Engel groups are locally nilpotent, and Medvedev~\cite{med} extended this result to compact (Hausdorff) groups.

A left Engel sink of an element $g\in  G$ is a set ${\mathcal L}(g)$ such that for every $x\in G$ all sufficiently long commutators $[x,g,g,\dots ,g]$ belong to ${\mathcal L}(g)$, that is, for every $x\in G$ there is a positive integer $n(x,g)$ such that
 $$[x,\underbrace{g,g,\dots ,g}_n]\in {\mathcal L}(g)\qquad \text{for all }n\geq n(x,g). $$ This notation is ambiguous in the sense that ${\mathcal L}(g)$ is not unique. Engel groups are precisely the groups for which we can choose ${\mathcal L}(g)=\{ 1\}$ for all $g\in G$. We say that an element $g\in G$ is almost left Engel if there is a finite left Engel sink ${\mathcal L}(g)$. For such elements the sink ${\mathcal L}(g)$ can be chosen to be minimal, thus eliminating ambiguity in this notation.

In \cite{monat} and \cite{khushu} we considered linear, profinite, and compact groups all of whose elements are almost left Engel. It was shown that all such groups are finite-by-(locally nilpotent).

Define a right Engel sink of $g\in G$ as a set ${\mathcal R}(g)$ such that 
for every $x\in G$ all sufficiently long commutators $[g,x,x,\dots,x]$ belong to ${\mathcal R}(g)$. Say that the element $g$ is almost right Engel if it has a finite right Engel sink.

The purpose of the present article is to establish the following result.

\begin{theorem}\label{main} Suppose that all elements in a linear group $G$ are almost right Engel. Then $G$ is finite-by-hypercentral.
\end{theorem}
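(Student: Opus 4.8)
The plan is to reduce to the connected component of a (Zariski) closure and exploit the interplay between the right Engel sink condition and the linear structure. First I would pass to the Zariski closure $\bar G$ of $G$ in $GL(m,F)$, noting that $\bar G$ has only finitely many connected components; write $G_0 = G \cap \bar G^\circ$, a normal subgroup of finite index in $G$. Since a finite extension of a finite-by-hypercentral group is again finite-by-hypercentral (the hypercentre being a verbal-type subgroup whose behaviour under finite extensions is controlled), it suffices to prove the theorem for $G_0$. So from now on I may assume $G$ is (Zariski-)connected, or at least that its closure is.

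\emph{Step 1: The derived length / soluble radical.} The key structural input for linear groups is that $\bar G^\circ$ has a maximal connected normal soluble subgroup $R$ (the soluble radical), with $\bar G^\circ / R$ semisimple. I would first dispose of the semisimple part. If $g \in G$ has a finite right Engel sink $\mathcal R(g)$, then for each $x$ the sequence $[g,x,x,\dots]$ is eventually inside a finite set; in a semisimple linear group over a field this should force $g$ to act ``unipotently modulo the sink,'' and a standard eigenvalue/trace argument — the map $y \mapsto [y,x]$ on the relevant matrix algebra, whose iterates must have bounded image — shows the eigenvalues of $g$ (relative to $x$) are roots of unity of bounded order. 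Pushing this through the simple factors, one concludes $\bar G^\circ / R$ must be trivial, i.e. $G$ is soluble-by-finite; after another finite-index reduction, $G$ is soluble-by-(finite) and in fact we may assume $\bar G$ soluble.

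\emph{Step 2: The soluble (hence triangularizable) case.} Now $\bar G$ is soluble, so by the Lie--Kolchin theorem $\bar G^\circ$ is triangularizable over $\bar F$; thus $G$ has a normal subgroup $U$ (the unipotent radical) with $G/U$ abelian-by-finite, and $U$ is a nilpotent normal subgroup. Here I would bring in Gruenberg's theorem: a linear Engel group is locally nilpotent, and more relevantly the right Engel sink condition, when the sinks are \emph{trivial}, makes $G$ an Engel group. The job is to show that, modulo a finite normal subgroup, all the sinks become trivial. The candidate finite normal subgroup is $F(G)$, the (finite) set obtained by collecting the torsion coming from the semisimple parts of elements; concretely, one wants to show that for each $g$ the finite sink $\mathcal R(g)$, after passing modulo a suitable finite normal subgroup $N$, can be taken to be $\{1\}$, so that $G/N$ is Engel, hence (being linear) locally nilpotent by Gruenberg, and then one checks ``finite-by-locally-nilpotent + linear $\Rightarrow$ finite-by-hypercentral'' because in a linear locally nilpotent group the upper central series reaches the whole group (linear locally nilpotent groups are hypercentral — indeed nilpotent-by-abelian-by-finite with bounded class on the unipotent part).

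\emph{The main obstacle.} I expect the crux to be Step 1 combined with producing the right finite normal subgroup $N$: showing that the union (or the normal closure) of all the minimal sinks $\mathcal R(g)$, $g \in G$, is \emph{finite}, not merely that each individual sink is finite. This is exactly the point where one must use linearity quantitatively — bounding the orders of the relevant roots of unity uniformly in terms of $m$ and the characteristic (via the fact that $GL(m,F)$ has a bound on the orders of finite subgroups of bounded rank, or Jordan-type arguments), so that all the ``semisimple obstructions'' live in a single finite subgroup. Once $N$ is shown finite and normal with $G/N$ Engel, the rest (Gruenberg, Lie--Kolchin, the passage from finite-by-locally-nilpotent to finite-by-hypercentral in the linear setting) is comparatively routine.
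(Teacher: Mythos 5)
Your outline has three genuine gaps, and two of them sit exactly where the real difficulty of the theorem lies. First, your opening reduction is not free: it is simply false in general that a finite extension of a finite-by-hypercentral group is finite-by-hypercentral (an infinite abelian group extended by an inverting involution is hypercentral-by-finite but has trivial hypercenter and no finite normal subgroup with hypercentral quotient). To pass from a finite-index subgroup back to $G$ you must use the almost right Engel hypothesis itself; the paper does this through Proposition~\ref{hyperce} and Lemma~\ref{nilpott}, which in turn rest on the quantitative finite-group lemmas of Section~2 (Lemmas~\ref{s11}--\ref{ssss12}) and on Theorem~\ref{sysak} of De Falco--de Giovanni--Musella--Sysak. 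None of that machinery appears in your sketch, and ``the hypercentre being a verbal-type subgroup'' is not a substitute for it.

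Second, your Step~1 disposes of the semisimple part with an ``eigenvalue/trace argument'' that is not an argument and would in fact fail in positive characteristic: in a simple locally finite linear group such as $PSL(2,\overline{\mathbb F}_p)$ every element already has finite order, so concluding that eigenvalues are roots of unity of bounded order yields no contradiction, and iterated commutators $[g,x,x,\dots,x]$ do not linearize in a way that a trace computation controls. Excluding precisely this case is where the paper works hardest: it first reduces to a periodic, hence locally finite, group (Tits alternative plus the virtually soluble case plus Burnside--Schur and Lemma~\ref{seri}), then kills the soluble radical and uses Proposition~\ref{ssss} (built on Feit--Thompson via Kegel--Wehrfritz and Lemma~\ref{ssss12}) to produce an infinite $2$-subgroup, and finally derives a contradiction with a minimal-centralizer argument and the Baer--Suzuki theorem applied to an involution $a$ with $\mathcal R(a)\subseteq C_G(a)$. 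Third, in your Step~2 you yourself flag as ``the main obstacle'' the existence of a single finite normal subgroup absorbing all the sinks; that is essentially the content of the theorem in the soluble case, and it is not obtained by any Jordan-type bound on torsion but by the inductive hypercenter arguments of Lemma~\ref{nilpott} (using \cite[Lemma 3.5]{monat}, Garascuk--Gruenberg, and again Theorem~\ref{sysak}). As it stands, the proposal is an outline of intentions at the two crucial points rather than a proof.
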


Recall that the union of all terms of the (transfinite) upper central series of $G$ is called the hypercenter. The group $G$ is hypercentral if it coincides with its hypercenter. The hypercentral groups are known to be locally nilpotent (see \cite[12.2.4]{rob}). By well-known theorems obtained by Garascuk \cite{gara} and Gruenberg \cite{grue}, Engel linear groups are hypercentral. Theorem \ref{main} provides a natural generalization of this result. It can also be viewed as a dual for the result that linear groups with almost left Engel elements are finite-by-hypercentral \cite{monat}.

\section{Right Engel sinks in finite groups} Throughout the article, we write ${\mathcal R}(g)$ for the minimal right Engel sink of an  almost right Engel element $g$ in a group $G$. We write ${\mathcal L}(g)$ for the minimal left Engel sink of  an almost  left Engel element $g\in G$. The commutator $[x,y,y,\dots,y]$, where $y$ is repeated $n$ times, will be denoted by $[x,{}_n\,y]$. A coprime automorphism of a finite group $G$ is an automorphism of order relatively prime to the order of $G$. If $A$ is a group of automorphisms of $G$, the symbol $[G,a]$ denotes the subgroup of $G$ generated by all elements of the form $g^{-1}g^a$, where $g\in G$ and $a\in A$. The set of prime divisors of order of a finite group $G$ is denoted by $\pi(G)$. We write $\langle X\rangle$ to denote the subgroup generated by a set $X$. The expression ``$(a,b,\dots )$-bounded'' means ``bounded from above in terms of  $a,b,\dots$ only''.

Throughout, we use the following standard facts on coprime groups of automorphisms without explicit references (\cite[Theorem 6.2.2, Theorem 5.3.5, Theorem 5.3.6]{go}).
\begin{lemma}\label{11} Let $A$ be a group of automorphisms of the finite group $G$ with $(|A|,|G|)=1$.   
\begin{enumerate}
\item If $N$ is any $A$-invariant normal subgroup of $G$, we have $C_{G/N}(A)=C_G(A)N/N$;
\item $G=[G,A]C_G(A)$;
\item If $G$ is abelian, then $G=[G,A]\times C_G(A)$;
\item $[[G,A],A]=[G,A]$;
\item $A$ leaves invariant a Sylow $p$-subgroup of $G$ for each prime $p\in\pi(G)$.
\end{enumerate}
\end{lemma}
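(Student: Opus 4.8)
The plan is to deduce all five statements from the standard coprime-action machinery, working throughout inside the split extension $\Gamma=GA$, where $G$ is normal, $A$ is a complement of order coprime to $|G|$, and $A$ acts on $G$ by conjugation as the prescribed automorphisms. I would treat \emph{(1)} as the heart of the lemma and organise the remaining parts around it: once \emph{(1)} is available, \emph{(2)} and \emph{(4)} follow formally, while \emph{(3)} and \emph{(5)} are handled by separate, self-contained arguments. The two external inputs I would invoke are the Schur--Zassenhaus theorem---both the existence of complements and the conjugacy of any two complements by an element of the normal factor---and the Feit--Thompson theorem, which guarantees that one of $G$, $A$ is solvable (their orders being coprime, at least one of them is odd) and so underwrites the conjugacy statement and the reduction in \emph{(1)}.

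For \emph{(1)}, the inclusion $C_G(A)N/N\subseteq C_{G/N}(A)$ is immediate, so the content is the reverse covering statement. I would first reduce to the case of abelian $N$: choosing an $A$-invariant normal series of $N$ with abelian factors (available since $N$ is solvable, being a subgroup of a group one of whose coprime ``halves'' is odd), one lifts centralising cosets one factor at a time by induction on $|N|$. For abelian $N$ the argument is the averaging/norm map: given $gN\in C_{G/N}(A)$, so that $g^{-1}g^a\in N$ for all $a$, the element $\prod_{a\in A}(g^{-1}g^a)$ lies in $N$, and because raising to the power $|A|$ is an automorphism of $N$ (here $\gcd(|A|,|N|)=1$ is used decisively) one can correct $g$ by an element of $N$ to a genuine fixed point $g'\in gN\cap C_G(A)$. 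Equivalently, this is the vanishing of $H^{1}(A,N)$ when $\gcd(|A|,|N|)=1$. This step---the passage through an abelian series together with the coprimeness that kills the obstruction---is where I expect the real work to lie.

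Granting \emph{(1)}, part \emph{(2)} is a one-line deduction: put $N=[G,A]$, which is $A$-invariant and normal in $G$; then $[G/N,A]=1$, so $C_{G/N}(A)=G/N$, and \emph{(1)} gives $G/N=C_G(A)N/N$, i.e. $G=C_G(A)[G,A]$. For \emph{(4)} I would pass to $\bar G=G/[[G,A],A]$: there $A$ centralises $[\bar G,A]=\overline{[G,A]}$, so by \emph{(2)} applied to $\bar G$ one gets $\bar G=[\bar G,A]\,C_{\bar G}(A)=C_{\bar G}(A)$, forcing $[\bar G,A]=1$ and hence $[G,A]=[[G,A],A]$. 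Part \emph{(3)} is the abelian norm map in isolation: the homomorphism $\theta\colon g\mapsto\prod_{a\in A}g^a$ has image in $C_G(A)$, restricts to the automorphism $g\mapsto g^{|A|}$ of $C_G(A)$, and contains $[G,A]$ in its kernel, whence $C_G(A)\cap[G,A]=1$; combined with \emph{(2)} this yields the direct decomposition $G=[G,A]\times C_G(A)$.

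Finally, for \emph{(5)} I would use a Frattini argument. Fix $P_0\in\mathrm{Syl}_p(G)$; as $p$ does not divide $|A|$ it is also a Sylow $p$-subgroup of $\Gamma$, so $\Gamma=G\,N_\Gamma(P_0)$. In $N_\Gamma(P_0)$ the subgroup $N_G(P_0)$ is normal of index $|A|$ coprime to its own order, so Schur--Zassenhaus supplies a complement $B$ of order $|A|$; then $B$ is a complement of $G$ in $\Gamma$, hence conjugate to $A$ by some $x\in G$, and $A=B^{x^{-1}}$ normalises the Sylow subgroup $P_0^{x^{-1}}$. The main obstacle throughout is \emph{(1)}, and beneath both \emph{(1)} and the conjugacy of complements used here lies the Feit--Thompson theorem; everything else is bookkeeping with commutator identities.
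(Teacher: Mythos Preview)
The paper does not prove this lemma at all: it is stated as a list of ``standard facts on coprime groups of automorphisms'' with a bare citation to Gorenstein's textbook (Theorems 6.2.2, 5.3.5, 5.3.6), so there is no argument in the paper to compare against. Your sketch is therefore considerably more than the paper provides, and parts \emph{(2)}--\emph{(5)} are carried out correctly.

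There is, however, a slip in your treatment of \emph{(1)}. Your reduction to abelian $N$ requires an $A$-invariant normal series of $N$ with abelian factors, i.e.\ that $N$ itself be solvable; but the justification you give---that ``one of the coprime halves is odd''---only yields this when it is $G$ (and hence $N$) that has odd order. If instead $|A|$ is odd while $|G|$ is even, $N$ can be non-solvable (for instance $G=N=\mathrm{Sz}(8)$ admits a coprime field automorphism of order $3$), and the abelian-series reduction is unavailable. The repair is already among the tools you listed: argue directly via Schur--Zassenhaus conjugacy. If $gN\in C_{G/N}(A)$ then $[g,A]\subseteq N$, so $A^g\le NA$; thus $A$ and $A^g$ are two complements to $N$ in $NA$, and since one of $N$, $A$ is solvable (Feit--Thompson) they are conjugate by some $n\in N$, whence $gn^{-1}\in N_\Gamma(A)\cap G=C_G(A)$. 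This handles both cases uniformly and is in fact the argument given in Gorenstein.
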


\begin{lemma}\label{s11} Let $G=H\langle a\rangle$ be a finite group where $H$ is a normal abelian subgroup and $(|a|,|H|)=1$. Then ${\mathcal R}(a)=[H,a]$.
\end{lemma}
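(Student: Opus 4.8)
The plan is to work inside $G = H\langle a\rangle$ where $H$ is normal abelian and $(|a|,|H|)=1$, and to show the two inclusions ${\mathcal R}(a) \supseteq [H,a]$ and ${\mathcal R}(a) \subseteq [H,a]$ separately. The key structural fact is Lemma~\ref{11}(3): since $H$ is abelian and $\langle a\rangle$ acts coprimely, $H = [H,a] \times C_H(a)$. Write $K = [H,a]$ and $C = C_H(a)$, so $H = K \times C$ and, by Lemma~\ref{11}(4), $[K,a]=K$. I would first record the basic commutator computation: for $h \in H$ and $x = hc$ with $c \in C$ arbitrary conjugating element — actually the relevant thing is to compute $[h, {}_n\, x]$ for $x$ ranging over $G$ and see where it lands.

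**The inclusion $[H,a] \subseteq {\mathcal R}(a)$.** Here I would take $x = a$ as the conjugating element and compute the sequence $[a, {}_n\, a]$... but that is trivially $1$, so instead I must use a general $x \in G$. The point of a right Engel sink is that ${\mathcal R}(a)$ must contain all sufficiently long $[a, {}_n\, x]$ for \emph{every} $x$. Write $x = ha^i$ for $h \in H$. Since $H$ is abelian and normal, $[a,x] = [a, ha^i] = [a,a^i][a,h]^{a^i} = [a,h]^{a^i} \in H$, and then all further commutators $[a,{}_n\,x]$ stay in $H$ and are obtained by repeatedly applying the map $y \mapsto [y,x] = y^{-1}y^x$ on $H$. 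On $H$, conjugation by $x = ha^i$ acts the same as conjugation by $a^i$ (as $H$ is abelian), so the relevant operator is $\sigma = (a^i - 1)$ acting on the abelian group $H$ written additively. Thus $[a,{}_n\,x] = (a^{-1}\cdot a^x)\sigma^{n-1}$ where $a^{-1}a^x = [a,h]^{a^i} \in [H,a] = K$. Since $\sigma$ restricted to $K$ is invertible (coprime action, $K = [K,a]$, so $a$ has no fixed points on $K$ other than the trivial considerations — more precisely $a-1$ is invertible on $K$, hence so is $a^i - 1$ when... care needed here), the orbit $\{(a^{-1}a^x)\sigma^{n-1}\}$ cycles through a subset of $K$, and choosing $h$ suitably and $i$ with $a^i$ generating the same action, one sweeps out all of $K$. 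So ${\mathcal R}(a) \supseteq K = [H,a]$.

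**The reverse inclusion ${\mathcal R}(a) \subseteq [H,a]$.** For this I would show directly that for every $x \in G$ and every $n \geq 1$ (or at least $n$ large), $[a,{}_n\,x] \in [H,a]$, so that the set of all such tails is contained in $[H,a]$; then minimality of ${\mathcal R}(a)$ forces ${\mathcal R}(a) \subseteq [H,a]$ — one must be slightly careful, since the minimal sink is the set of elements appearing infinitely often, but any set containing all sufficiently long commutators contains the minimal sink. Writing $x = ha^i$ as above, $[a,x] = [a,h]^{a^i} \in [H,a]$ because $[H,a]$ is $a$-invariant (indeed $\langle a \rangle$-invariant) and normal in $G$. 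Then by induction $[a,{}_n\,x] \in [H,a]$ for all $n \geq 1$, since $[H,a]$ is $x$-invariant: $[[H,a],x] \subseteq [H,a]$ as $[H,a] \trianglelefteq G$. This gives the containment.

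**Main obstacle.** The delicate point is the first inclusion: verifying that the operator $a^i - 1$ is invertible on $K = [H,a]$ for a well-chosen $i$, and that by varying $h$ (to move the starting point $a^{-1}a^x$ around inside $K$) together with $i$ one actually realizes \emph{every} element of $K$ as some tail commutator $[a,{}_n\,x]$, not merely a generating set. I expect one handles this by fixing $i=1$ (so $x \in Ha$), noting $a-1$ is invertible on $K$ since $C_K(a)=1$ and the action is coprime, hence $\langle a \rangle$ permutes $K$ via an invertible affine-type map and the forward orbit of any $k \in K$ under $y \mapsto y^{a-1}$ is eventually periodic and the periodic part generates — and then using that $a^{-1}a^x = [a,h]$ ranges over all of $[a,H] = [H,a] = K$ as $h$ ranges over $H$, together with invertibility of $(a-1)$ on $K$ to conclude the tails $([a,h])(a-1)^{n-1}$ already cover $K$. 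I would also double-check the edge case $K = 1$, where both sides are trivial.
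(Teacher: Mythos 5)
Your proposal is correct and in substance the same as the paper's argument: the paper also restricts to test elements of the coset $Ha$, where the identity $[a,{}_n\,(ax)]=[x^{-1},{}_{n+1}\,a]$ (for $x\in H$) turns the iteration into repeated application of the map $y\mapsto [y,a]$ on the abelian group $H$, and then uses the coprime decomposition $H=[H,a]\times C_H(a)$ exactly as you do, with the easy reverse inclusion coming from normality of $[H,a]$. Your write-up just makes explicit (invertibility of $a-1$ on $[H,a]$, surjectivity of $h\mapsto[a,h]$ onto $[H,a]$, pure periodicity of the orbits) what the paper compresses into ``the result follows.''
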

\begin{proof} Choose $x\in H$. Since $H$ is abelian, we have $[a,{}_n\,(ax)]=[x^{-1},{}_{n+1}\,a]$ and the result follows.
\end{proof}

In the next lemma we use the well-known fact that any right Engel element in a finite group $G$ belongs to the hypercenter of $G$.
\begin{lemma}\label{nilposs} Let $G=H\langle a\rangle$ be a finite group where $H$ is a normal nilpotent subgroup and $(|a|,|H|)=1$. Then $|[H,a]|$ is $\vert{\mathcal R}(a)\vert$-bounded.
\end{lemma}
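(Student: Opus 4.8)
\medskip

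\noindent\textbf{Proof proposal.} The plan is to induct on $r=|{\mathcal R}(a)|$, reducing at each step to the same element $a$ acting on the subgroup $\gamma_2([H,a])$, which will carry a strictly smaller sink. Put $K=[H,a]$. Then $K\trianglelefteq G$ and $[K,a]=K$ by Lemma~\ref{11}(4); writing $x=ha^j$ and using $[a,ha^j]=[a,h]^{a^j}$ one sees $[a,x]\in K$ for all $x\in G$, so every $[a,{}_n\,x]$ lies in $K$ and ${\mathcal R}(a)\subseteq K$. If $K=1$ there is nothing to prove. If $r=1$ then $a$ is a right Engel element of $G$, hence lies in the hypercenter $Z_\infty(G)$, hence in some $Z_m(G)$; then the normal closure $M=\langle a\rangle^G$ is nilpotent of class at most $m$ and contains $K$, so $K=[K,{}_m\,a]\le\gamma_{m+1}(M)=1$ (using $[K,{}_n\,a]=[K,a]=K$ for all $n\ge1$). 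So we may assume $K\neq1$ and $r\ge2$.

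For $k\in K$ a routine induction on $n$ gives
$$[a,\underbrace{ak,ak,\dots,ak}_{n}]\ \equiv\ [\,[a,k],\underbrace{a,a,\dots,a}_{n-1}\,]\pmod{\gamma_2(K)}$$
(the right side being $[a,k]$ when $n=1$), using $[u,ak]=[u,k][u,a]^k$ together with $[u,k]\in\gamma_2(K)$ and $[u,a]^k\equiv[u,a]\pmod{\gamma_2(K)}$ for $u\in K$. In $K/\gamma_2(K)$, written additively, on which $a$ acts with $[K/\gamma_2(K),a]=K/\gamma_2(K)$ so that $\bar\alpha-1$ is invertible ($\bar\alpha$ being the induced automorphism), the right side equals $-(\bar\alpha-1)^n\bar k$; since $\bar\alpha-1$ has finite order one can choose $k$ realising any prescribed residue for arbitrarily large $n$. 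Hence the image of ${\mathcal R}(a)$ in $K/\gamma_2(K)$ is the whole group. In particular $|K/\gamma_2(K)|\le r$, and since $|K/\gamma_2(K)|\ge2$ at least one element of ${\mathcal R}(a)$ lies outside $\gamma_2(K)$, so $|{\mathcal R}(a)\cap\gamma_2(K)|\le r-1$.

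Now let $N=[\gamma_2(K),a]\trianglelefteq G$ and $\overline K=K/N$; then $[\overline K,a]=\overline K$ while $[\gamma_2(\overline K),a]=1$. I claim $\overline K$ has nilpotency class at most $2$. It suffices to check this for each Sylow subgroup $\overline K_p$, which inherits both properties; there one passes to the mod-$p$ graded Lie algebra associated to $\overline K_p$, on which $a$ acts semisimply, with no nonzero fixed point in degree $1$ but trivially in all higher degrees. A nonzero triple bracket $[[e_i,e_j],e_l]$ of eigenvectors (eigenvalues $\theta_i,\theta_j,\theta_l\neq1$) would force $\theta_i\theta_j=1$ (from degree $2$) and $\theta_i\theta_j\theta_l=1$ (from degree $3$), hence $\theta_l=1$, a contradiction; so the degree-$3$ part, and thus everything of higher degree, vanishes, i.e.\ $\gamma_3(\overline K_p)=1$. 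Since $|\overline K/\gamma_2(\overline K)|=|K/\gamma_2(K)|\le r$, the group $\overline K$ is generated by at most $\log_2 r$ elements, and $\gamma_2(\overline K)\le Z(\overline K)$ is generated by at most $\binom{\log_2 r}{2}$ commutators, each of order dividing $\exp(\overline K/\gamma_2(\overline K))\le r$; so $|\overline K|$ is $r$-bounded. Finally $\gamma_2(K)\langle a\rangle$ is again a group of the type in the lemma (normal nilpotent part $\gamma_2(K)$), and for $x\in\gamma_2(K)\langle a\rangle$ every $[a,{}_n\,x]$ lies in $\gamma_2(K)$, so the minimal right Engel sink of $a$ there is contained in ${\mathcal R}(a)\cap\gamma_2(K)$ and has size $\le r-1$; by induction $|N|=|[\gamma_2(K),a]|$ is $(r-1)$-bounded. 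Therefore $|[H,a]|=|K|=|\overline K|\cdot|N|$ is $r$-bounded, completing the induction.

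I expect the Lie-algebra step to be the main obstacle: proving carefully that $[\overline K,a]=\overline K$ together with $[\gamma_2(\overline K),a]=1$ forces class at most $2$ — the reduction to a single prime so that $a$ acts semisimply, and the eigenvalue bookkeeping for a nonzero triple bracket — and, to a lesser extent, the routine but fiddly verifications that the sink of $a$ restricts correctly to the subgroups $\gamma_2(K)\langle a\rangle$ and $\gamma_2(K)\langle a\rangle$, and that the periodicity of $\bar\alpha-1$ indeed lets one hit a given residue of $[a,{}_n\,(ak)]$ modulo $\gamma_2(K)$ for arbitrarily large $n$.
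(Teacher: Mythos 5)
Your first half is fine: the congruence $[a,{}_n\,(ak)]\equiv[[a,k],{}_{n-1}\,a]\pmod{\gamma_2(K)}$, the conclusion that ${\mathcal R}(a)$ meets every coset of $\gamma_2(K)$ in $K$ (hence $|K/\gamma_2(K)|\le r$ and $|{\mathcal R}(a)\cap\gamma_2(K)|\le r-1$), and the restriction of the sink to $\gamma_2(K)\langle a\rangle$ giving $|[\gamma_2(K),a]|$ bounded by induction, are all correct (and the class-$\le 2$ claim would also be fine, e.g.\ by the three subgroups lemma, if the quotient existed). The genuine gap is the unproved assertion that $N=[\gamma_2(K),a]$ is normal in $G$ (or even in $K$). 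The subgroup $[\gamma_2(K),a]$ is normal in $\gamma_2(K)\langle a\rangle$, but nothing forces $K$ to normalize it, and in general it does not. For a concrete counterexample, take $p\equiv 1\pmod 5$, let $H=K$ be the free group of exponent $p$ and nilpotency class $3$ on $x_1,x_2$, and let $a$ be the automorphism $x_1\mapsto x_1^{s}$, $x_2\mapsto x_2^{s^3}$, where $s$ has multiplicative order $5$ modulo $p$; then $a^5=1$, the action is coprime, and $K=[K,a]$ since $a$ has no nontrivial fixed points on $K/\Phi(K)$. Here $\gamma_2(K)$ is abelian, $a$ acts on it with eigenvalues $s^4$ (on a complement of $\gamma_3$), $1$ (on $[x_1,x_2,x_1]$) and $s^2$ (on $[x_1,x_2,x_2]$), so $\gamma_2(K)=[\gamma_2(K),a]\times C_{\gamma_2(K)}(a)$ with $C_{\gamma_2(K)}(a)=\langle [x_1,x_2,x_1]\rangle\neq 1$. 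Choosing $w\in[\gamma_2(K),a]$ with $w\equiv[x_1,x_2]\pmod{\gamma_3(K)}$ one gets $w^{x_1}=w\,[x_1,x_2,x_1]\notin[\gamma_2(K),a]$. So $\overline K=K/N$ is not defined, and the entire second half of your argument (class at most $2$ for $\overline K$, the generator/commutator count, and the factorization $|K|=|\overline K|\cdot|N|$) collapses at exactly the point where the unboundedly large fixed points of $a$ inside $\gamma_2(K)$ would have to be controlled.

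The repair is not routine within your scheme: replacing $N$ by its normal closure $N^K$ restores the quotient and the class-$\le2$ conclusion, but the induction no longer bounds $|N^K|$, since the normal closure of a subgroup of bounded order need not have bounded order; and without any quotient you are left needing to bound the index of $[\gamma_2(K),a]$ in $\gamma_2(K)$, i.e.\ essentially $C_{\gamma_2(K)}(a)$, which is the real content of the lemma. The paper deals with precisely this difficulty by a different mechanism: if $N$ is an $a$-invariant normal subgroup of $H$ with ${\mathcal R}(a)\cap N=1$, then $a$ lies in the hypercenter of $N\langle a\rangle$, so $[N,a]=1$, and $H=[H,a]$ then forces $N\le Z(H)$; hence ${\mathcal R}(a)$ meets $Z_2(G)$ nontrivially, the nilpotency class of $H$ is bounded by $2|{\mathcal R}(a)|$, and an induction on the class combined with Schur's theorem and Lemma \ref{s11} finishes. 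You would need some substitute for such a mechanism — a normal, bounded subgroup to factor out, or a direct way to control the $a$-fixed points in $\gamma_2(K)$ — for your approach to go through.
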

\begin{proof} Without loss of generality we can assume that $H=[H,a]$. Suppose that $N$ is a normal $a$-invariant subgroup of $H$ such that ${\mathcal R}(a)\cap N=1$. Then $a$ belongs to the hypercenter of $N\langle a\rangle$ and hence $[N,a]=1$. Taking into account that $H=[H,a]$ we deduce that $N\leq Z(H)$. In particular, it follows that ${\mathcal R}(a)\cap Z_2(G)\not=1$. Arguing by induction on $\vert{\mathcal R}(a)\vert$ and passing to the quotient $G/Z_2(G)$ we deduce that the nilpotency class of $H$ is at most 2$\vert{\mathcal R}(a)\vert$. Therefore we can use induction on the nilpotency class of $H$. By induction $|H/Z(H)|$ is $\vert{\mathcal R}(a)\vert$-bounded. In view of Schur's theorem \cite[10.1.4]{rob} we conclude that $H'$ has $\vert{\mathcal R}(a)\vert$-bounded order. Pass to the quotient $G/H'$ and assume that $H$ is abelian. Now Lemma \ref{s11} states that $H={\mathcal R}(a)$ and the result follows.
\end{proof}

\begin{lemma}\label{golds} Let $G$ be a finite group acted on by a finite group $A$ such that $(|A|,|G|)=1$. Then $[G,A]$ is generated by all nilpotent subgroups $T$ such that $T=[T,A]$.
\end{lemma}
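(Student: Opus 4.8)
The plan is to set $R=\langle T\mid T\le G \text{ nilpotent},\ T=[T,A]\rangle$. Every such $T$ is automatically $A$-invariant, so $R$ is $A$-invariant, and plainly $R\le[G,A]$; the content is the reverse inclusion, which I would establish by induction on $|G|$. Using Lemma~\ref{11}(4) one has $[G,A]=[[G,A],A]$, so after replacing $G$ by $[G,A]$ it may be assumed that $G=[G,A]$. If $G$ is nilpotent it is itself one of the subgroups $T$ and $R=G$, so assume $G$ is not nilpotent. The remark that drives the induction is this: for every \emph{proper} $A$-invariant subgroup $H<G$ the inductive hypothesis gives $[H,A]\le R$; in particular, any proper $A$-invariant $H$ with $H=[H,A]$ is contained in $R$. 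Thus it suffices to prove that $G$ is generated by proper $A$-invariant subgroups which equal their commutator with $A$.

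To supply such subgroups I would choose a minimal $A$-invariant normal subgroup $N$ of $G$ and apply the inductive hypothesis to $G/N=[G/N,A]$, writing it as a product of nilpotent subgroups $\bar T$ with $[\bar T,A]=\bar T$. Passing to the preimages $\hat T$ and putting $T^{*}=[\hat T,A]$, each $T^{*}$ is $A$-invariant, equals $[T^{*},A]$ by Lemma~\ref{11}(4), and satisfies $T^{*}N=\hat T$, so the $T^{*}$ together with $N$ generate $G$. If all the $T^{*}$ are proper, then $\langle T^{*}\rangle\le R$ and $G=RN$, reducing matters to showing $N\le R$; if some $T^{*}$ equals $G$, then $G/N\cong\bar T$ is nilpotent, a configuration which — for a well-chosen $N$ — forces $G$ nilpotent against hypothesis. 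For $N$ abelian, Lemma~\ref{11}(3) gives $N=[N,A]\times C_N(A)$ with $[N,A]$ nilpotent, hence $[N,A]\le R$; in particular if $N=[N,A]$ then $N\le R$, and if $N\le Z(G)$ one gets $G=RN=RC_N(A)$ and $[G,A]=[RC_N(A),A]=[R,A]\le R$, so $G=R$. Combined with the standard coprime-action machinery for solvable groups (Fitting series, $A$-invariant Sylow and Hall subgroups) this should dispose of the case $G$ solvable.

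The part I expect to be the real obstacle is when a minimal $A$-invariant normal subgroup $N$ (possibly $N=G$) is non-abelian — a direct product of isomorphic nonabelian simple groups transitively permuted by $A$. One knows $C_G(A)\ne 1$, since a fixed-point-free coprime automorphism group would force $G$ solvable, and one wants $N$ to be generated by $A$-invariant nilpotent subgroups $T$ with $T=[T,A]$: natural candidates are diagonal subgroups isomorphic to a single simple factor, and derived subgroups or $C_G(A)$-conjugates of $A$-invariant Sylow subgroups, all of which equal their commutator with $A$. Making this work amounts to proving that a nonabelian finite simple group $S$ admitting a nontrivial coprime automorphism group $A$ is generated by such subgroups — equivalently that $S=\langle [H,A]\mid H<S \text{ is } A\text{-invariant}\rangle$ — and this, unlike the commutator calculus and coprime-action facts (Lemma~\ref{11}) powering the rest of the argument, appears to need genuine structural input on finite simple groups: generation by two Sylow $p$-subgroups, the explicit form of $C_S(A)$ for field, diagonal and graph automorphisms, and generation by conjugates of a parabolic subgroup or of a maximal torus.
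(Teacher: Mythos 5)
There is a genuine gap, and you have named it yourself: your induction via a minimal $A$-invariant normal subgroup $N$ stalls when $N$ (or $G$ itself) is a nonabelian characteristically simple group, and your proposed rescue — generation results for finite simple groups with coprime automorphisms, parabolic subgroups, tori, etc. — is exactly the heavy structural input that the lemma does not in fact require. (A smaller but real weak point earlier on: the claim that if some $T^{*}=G$ then $G/N\cong\bar T$ nilpotent ``for a well-chosen $N$'' forces $G$ nilpotent is unjustified; minimality of $N$ gives no such conclusion in general.) So as it stands the argument proves the lemma only in the soluble case, essentially Goldschmidt's original setting, and leaves the general case open.

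The missing idea is that no induction and no minimal normal subgroups are needed. Let $K$ be the subgroup generated by all nilpotent $T$ with $T=[T,A]$. First, $C_G(A)$ normalizes $K$, since conjugation by an $A$-fixed element permutes the generating family. Second — and this is the key observation you never exploit — for \emph{every} $A$-invariant nilpotent subgroup $P$ (not just those with $P=[P,A]$), the subgroup $[P,A]$ is itself nilpotent and satisfies $[[P,A],A]=[P,A]$ by Lemma \ref{11}(4), hence $[P,A]\leq K$; combined with $P=[P,A]C_P(A)$ this shows every $A$-invariant nilpotent subgroup normalizes $K$. Since $G$ is generated by its $A$-invariant Sylow $p$-subgroups (Lemma \ref{11}(5)), $K$ is normal in $G$. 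Finally, the image in $G/K$ of each $A$-invariant Sylow subgroup $P$ is centralized by $A$ because $[P,A]\leq K$, so $A$ acts trivially on $G/K$; after the harmless reduction to $G=[G,A]$ this gives $G/K=[G/K,A]=1$, i.e.\ $K=G$. This argument treats the simple (and every other) case uniformly, using only the coprime-action facts of Lemma \ref{11}.
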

\begin{proof} We can assume that $G=[G,A]$. Let $K$ be the subgroup of $G$ generated by all nilpotent subgroups $T$ such that $T=[T,A]$. We need to show that $K=G$. Obviously, $C_G(A)\leq N_G(K)$. If $P$ is any nilpotent $A$-invariant subgroup, we have $P=[P,A]C_P(A)$. Here $[P,A]\leq K$ and $C_P(A)\leq N_G(K)$. We see that each nilpotent $A$-invariant subgroup normalizes $K$. Since $G$ has an $A$-invariant Sylow $p$-subgroup for each prime $p\in\pi(G)$ and since $G$ is generated by such Sylow $p$-subgroups, we conclude that $K$ is normal in $G$. We observe that $A$ acts trivially on nilpotent $A$-invariant subgroup of $G/K$. Therefore $A$ acts trivially on $G/K$. Since $G=[G,A]$, it follows that $G/K=[G/K,A]$ and so $G/K=1$, as required.
\end{proof}
Remark that the above lemma for soluble groups $G$ was established by Goldschmidt in \cite[Lemma 2.1]{gold}. 

\begin{lemma}\label{s12} Let $G=H\langle a\rangle$ be a finite group where $H$ is a normal subgroup and $(|a|,|H|)=1$. Then $\langle{\mathcal R}(a)\rangle=[H,a]$.
\end{lemma}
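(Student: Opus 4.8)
The strategy is to prove the two inclusions $\langle\mathcal R(a)\rangle\le[H,a]$ and $[H,a]\le\langle\mathcal R(a)\rangle$ separately. For the first one, one checks the elementary facts that $[H,a]$ coincides with $[G,a]$, is a normal subgroup of $G$, and that $a$ becomes central modulo $[H,a]$. Consequently $[a,{}_n\,x]\in[H,a]$ for every $x\in G$ and every $n\ge1$, so $[H,a]$ is a right Engel sink of $a$; since $\mathcal R(a)$ is the minimal such sink, $\mathcal R(a)\subseteq[H,a]$, and therefore $\langle\mathcal R(a)\rangle\le[H,a]$.

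The reverse inclusion is the substantial part. Applying Lemma \ref{golds} to the coprime action of $A=\langle a\rangle$ on $H$, we see that $[H,a]=[H,A]$ is generated by nilpotent subgroups $T\le H$ with $T=[T,A]=[T,a]$, and every such $T$ is automatically $a$-invariant (the identity $T=[T,a]$ forces $t^{-1}t^a\in T$ for all $t\in T$). So it is enough to show $T\le\langle\mathcal R(a)\rangle$ for each such $T$. Put $G_T=T\langle a\rangle\le G$. Since any right Engel sink of $a$ in $G$ restricts to a right Engel sink of $a$ in the smaller group $G_T$, the minimal right Engel sink $\mathcal R_{G_T}(a)$ of $a$ in $G_T$ is contained in $\mathcal R(a)$; hence it suffices to prove that $T=\langle\mathcal R_{G_T}(a)\rangle$.

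For this last equality I would work inside $G_T$ and pass to $\overline{G_T}=G_T/\Phi(T)$, with $\Phi(T)$ the Frattini subgroup of $T$ (characteristic in $T$, hence normal in $G_T$). By the first part applied inside $G_T$ we have $\langle\mathcal R_{G_T}(a)\rangle\le[T,a]=T$. Now $T/\Phi(T)$ is abelian, of order coprime to $|a|$, and satisfies $T/\Phi(T)=[T/\Phi(T),a]$ (the image of $T=[T,a]$); so Lemma \ref{s11} gives that the minimal right Engel sink of $\bar a$ in $\overline{G_T}$ equals $[T/\Phi(T),a]=T/\Phi(T)$. Since the image of $\mathcal R_{G_T}(a)$ in $\overline{G_T}$ is again a right Engel sink of $\bar a$, it contains this minimal sink, and hence $T=\langle\mathcal R_{G_T}(a)\rangle\Phi(T)$. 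Because the Frattini subgroup consists of non-generators of $T$, this forces $\langle\mathcal R_{G_T}(a)\rangle=T$. Combining the two inclusions yields $\langle\mathcal R(a)\rangle=[H,a]$.

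The point demanding the most care is the behaviour of minimal right Engel sinks under restriction to a subgroup and under images in a quotient. Both are handled via the description of the minimal right Engel sink of $g$ in a finite group as the union, over all $x$, of the eventual cycles of the maps $y\mapsto[y,x]$; from this it is immediate that the minimal sink is contained in every right Engel sink, that a right Engel sink of $g$ in $G$ is also a right Engel sink of $g$ in every subgroup containing $g$, and that the image of a right Engel sink in a quotient is a right Engel sink there. Granting these observations, the whole argument rests only on Lemmas \ref{s11} and \ref{golds} and the non-generator property of the Frattini subgroup.
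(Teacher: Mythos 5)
Your proof is correct and follows essentially the same route as the paper: the easy inclusion $\langle\mathcal R(a)\rangle\le[H,a]$ from $a$ centralizing $H/[H,a]$, then Lemma \ref{golds} to reduce to nilpotent (in the paper, $p$-) subgroups $T=[T,a]$, followed by passage to the Frattini quotient to reach the abelian case handled by Lemma \ref{s11}. The only difference is presentational: you spell out the behaviour of minimal sinks under restriction to $T\langle a\rangle$ and under quotients, which the paper leaves implicit in its ``without loss of generality'' reductions.
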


\begin{proof} Since $a$ centralizes $H/[H,a]$, it is clear that $\langle{\mathcal R}(a)\rangle\leq[H,a]$. Therefore it is sufficient to show that $H=\langle{\mathcal R}(a)\rangle$ under the additional assumption that $H=[H,a]$. Lemma \ref{golds} shows that $H$ is generated by its $a$-invariant $p$-subgroups $T$ such that $T=[T,a]$. Therefore, without loss of generality we can assume that $H$ is a $p$-group for some prime $p$. Then it is sufficient to show that $H$ is generated by ${\mathcal R}(\alpha)$ modulo the Frattini subgroup $\Phi(H)$. We can pass to the quotient $G/\Phi(H)$ and assume that $H$ is abelian. Now the lemma is immediate from Lemma \ref{s11}.
\end{proof}
\begin{lemma}\label{ssss12} Let $G=H\langle a\rangle$ be a finite group where $H$ is a normal subgroup and $(|a|,|H|)=1$. Then $|[H,a]|$ is $\vert{\mathcal R}(a)\vert$-bounded.
\end{lemma}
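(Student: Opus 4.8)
\emph{Proof proposal.} Write $r=|\mathcal R(a)|$. The plan is to reduce to the case $H=[H,a]$ and then induct on $|G|$, using the three preceding lemmas together with one elementary ``absorption'' observation. Since $[H,a]$ is $a$-invariant with $(|a|,|[H,a]|)=1$, and for $x\in [H,a]\langle a\rangle$ every commutator $[a,{}_n x]$ already lies in $[H,a]\langle a\rangle$, the minimal right Engel sink of $a$ in $[H,a]\langle a\rangle$ has size at most $r$; as $[[H,a],a]=[H,a]$ by Lemma \ref{11}(4), we may replace $G$ by $[H,a]\langle a\rangle$ and assume $H=[H,a]$. Two tools will be used repeatedly. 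First, by Lemma \ref{s12} we have $H=\langle\mathcal R(a)\rangle$, so $H$ is generated by $r$ elements; and by Lemma \ref{golds} together with Lemma \ref{nilposs} (applied inside $T\langle a\rangle$, whose minimal sink for $a$ has size $\le r$), $H$ is generated by nilpotent $a$-invariant subgroups $T$ with $T=[T,a]$, each of $r$-bounded order. Second, the \emph{absorption principle}: if $N\trianglelefteq H$ is $a$-invariant and $\mathcal R(a)\cap N=1$, then $[N,a]=1$. Indeed $[a,x]\in N$ for every $x\in N\langle a\rangle$ (because $[a,na^k]=[a,n]^{a^k}$ and $N$ is $a$-invariant), so all long commutators $[a,{}_n x]$ lie in $\mathcal R(a)\cap N=1$; hence $a$ is a right Engel element of $N\langle a\rangle$, so it lies in the hypercenter there, whence $[N,{}_c a]=1$ for some $c$ and then $[N,a]=[N,{}_c a]=1$ by Lemma \ref{11}(4).

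Now induct on $|G|$. If $H$ had two distinct minimal $a$-invariant normal subgroups $M_1,M_2$, then $M_1\cap M_2=1$ and $H$ embeds into $H/M_1\times H/M_2$; each $H/M_i$ equals $[H/M_i,a]$ and has right Engel sink of size $\le r$, so by induction $|H|$ is $r$-bounded and we are done. So we may assume there is a unique minimal $a$-invariant normal subgroup $M$; by induction $|H/M|$ is $r$-bounded, so it suffices to bound $|M|$. If $M$ is abelian, then either $[M,a]=M$, in which case $|M|=|[M,a]|$ is $r$-bounded by Lemma \ref{nilposs}; or $[M,a]=1$, in which case the absorption principle together with $H=[H,a]$ forces $H$ to centralise $M$, so by minimality $M$ is cyclic of prime order $q$ with $M\le Z(G)$, and since $M$ is not a direct factor of $H$ (again because $H=[H,a]$) the central extension $1\to M\to H\to H/M\to 1$ is non-split, so Schur--Zassenhaus gives $q\mid|H/M|$ and hence $q$ is $r$-bounded. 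Thus in the abelian case $|M|$ is $r$-bounded.

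The essential case is $M=S_1\times\cdots\times S_n$ non-abelian, with the $S_i$ copies of a non-abelian simple group $S$. By uniqueness of $M$, $C_H(M)=1$, so $H\hookrightarrow\operatorname{Aut}(M)$. Writing $M=N_1\times\cdots\times N_s$ for its decomposition into minimal normal subgroups of $H$, each $N_i$ a product of $m$ of the factors with $n=ms$ and permuted transitively by $\langle a\rangle$, the image of $H$ in $\operatorname{Sym}(n)$ is a quotient of $H/M$ that is transitive on the $m$ factors of each block, so $m$ is $r$-bounded; moreover $\langle a\rangle$ has at most $m$ orbits on the $n$ factors. For each prime $p\mid|S|$ pick an $a$-invariant Sylow $p$-subgroup $P$ of $M$; then $|[P,a]|$ is $r$-bounded by Lemma \ref{nilposs}, while $|C_P(a)|\le|S|_p^{\,m}$ (each $\langle a\rangle$-orbit on the factors contributes a ``diagonal'' of size at most $|S|_p$), so $|S|_p^{\,n-m}\le|P|/|C_P(a)|\le|[P,a]|$ is $r$-bounded; also, by the absorption principle the primes $p$ with $[P,a]\ne1$ number at most $r$, which gives $|M|\le(r\text{-bounded})\cdot|C_M(a)|$. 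If $n>m$ this forces every $|S|_p$, hence $\pi(S)$ and $|S|$, and then $n$, to be $r$-bounded, so $|M|$ is $r$-bounded. If $n=m$ (so $M$ is a single minimal normal subgroup) one reduces to $[M,a]=M$ and uses the structure of finite simple groups: the automorphism of $S$ induced by a power of $a$ along each $\langle a\rangle$-orbit of factors is then a non-trivial automorphism of order coprime to $|S|$, hence by the classification a field automorphism, whose fixed-point subgroup has order at most $|S|^{1/2}$; so $|C_M(a)|\le|S|^{m/2}$, and $|M|\le(r\text{-bounded})\cdot|S|^{m/2}$ forces $|S|$, and hence $|M|$, to be $r$-bounded. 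This closes the induction.

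The main obstacle is exactly this last step: bounding a non-abelian simple section of $H$ (more precisely, controlling how $a$ permutes and twists the simple factors of $M$), which appears to require classification-based information on automorphisms of finite simple groups of order coprime to the group order, rather than only the elementary coprime-action facts of Lemma \ref{11} and the earlier lemmas; the reduction to $[M,a]=M$ in the case $n=m$ also needs a little care, since $[M,a]$ need not be normal in $H$.
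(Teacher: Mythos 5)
There is a genuine gap, and it is structural: your induction on $|G|$ cannot produce an $r$-bounded bound. The statement to be proved is quantitative, i.e.\ there is a single function $f$ with $|[H,a]|\le f(|\mathcal R(a)|)$ for \emph{all} groups of the given form; an induction on $|G|$ only works if each inductive step preserves the same bound $f(r)$. Yours does not: in the branch with two minimal $a$-invariant normal subgroups you get $|H|\le |H/M_1|\,|H/M_2|\le f(r)^2$, and in the unique-minimal branch you get $|H|\le f(r)\cdot(\text{bound on }|M|)$; in both cases the conclusion exceeds the inductive bound, and since the recursion depth (essentially the length of a chief series) is not bounded in terms of $r$, unwinding the recursion gives a bound depending on $|G|$, not on $r$. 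In addition, the branch you yourself flag as the ``essential case'' is not actually proved: bounding the non-abelian socle invokes CFSG-type information about coprime automorphisms of simple groups and the reduction to $[M,a]=M$ when $n=m$ is left open. There is also a smaller but real gap earlier: the dichotomy ``$M$ abelian $\Rightarrow$ $[M,a]=M$ or $[M,a]=1$'' does not follow from minimality of $M$ as an $a$-invariant normal subgroup of $H$, because $[M,a]$ (and likewise $C_M(a)$) need not be normal in $H$, as you note yourself at the end.

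None of this machinery is needed. The paper's proof is short and elementary given the preceding lemmas: assume $H=[H,a]$; by Lemma~\ref{s12}, $H=\langle\mathcal R(a)\rangle$, hence $C_H(\mathcal R(a))\le Z(H)$. Since $C_H(a)$ permutes the finite set $\mathcal R(a)$ by conjugation, the index $|C_H(a):C_H(a)\cap Z(H)|$ is $r$-bounded (at most $r!$, as the kernel of this action centralizes $\mathcal R(a)$). For each prime $p$ take an $a$-invariant Sylow $p$-subgroup $P$; then $P=[P,a]C_P(a)$ with $|[P,a]|$ $r$-bounded by Lemma~\ref{nilposs}, so $|P:P\cap Z(H)|$ is $r$-bounded, and only $r$-boundedly many primes can contribute; hence $|H:Z(H)|$ is $r$-bounded. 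Schur's theorem bounds $|H'|$, and after passing to $G/H'$ one finishes with Lemma~\ref{s11}, which gives $H=\mathcal R(a)$. You should replace the minimal-normal-subgroup/CFSG strategy by this centre-index argument (your ``absorption principle'' is the same device the paper uses inside Lemma~\ref{nilposs}, so your toolkit was already pointing in the right direction).
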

\begin{proof} Without loss of generality assume that $H=[H,a]$. By Lemma \ref{s12} we have $\langle{\mathcal R}(a)\rangle=H$. Therefore $C_H({\mathcal R}(a))\leq Z(H)$. Since $C_H(a)$ normalizes ${\mathcal R}(a)$, the index of $C_H(a)\cap Z(H)$ in $C_H(a)$ is bounded in terms of $|{\mathcal R}(a)|$ only. Let $p\in\pi(H)$ and $P$ be an $a$-invariant Sylow $p$-subgroup of $H$. We have $P=[P,a]C_P(a)$. Lemma \ref{nilposs} states that $|[P,a]|$ is $\vert{\mathcal R}(a)\vert$-bounded. Hence, the index of $P\cap Z(H)$ in $P$ is bounded in terms of $|{\mathcal R}(a)|$ only. Since $H$ has an $a$-invariant Sylow $p$-subgroup for every $p\in\pi(H)$, we deduce that the index of $Z(H)$ in $H$ is bounded in terms of $|{\mathcal R}(a)|$ only. Schur's theorem now says that the order of the derived group $H'$ is bounded. We can pass to the quotient $G/H'$ and assume that $H$ is abelian. Now the result is immediate from Lemma \ref{s11}.
\end{proof}
\section{Almost right Engel elements in infinite groups}
In the present section we will establish two useful results about almost right Engel elements in infinite groups (see Propositions \ref{hyperce} and Proposition \ref{ssss}). If $a$ is an almost right Engel element in a group $G$ and $K$ is a subgroup of $G$, we write ${\mathcal R}_K(a)$ to denote the minimal subset of $G$ containing all sufficiently long commutators $[a,x,x,\dots,x]$ where $x\in K$. If $a$ is an almost left Engel element in $G$, we write ${\mathcal L}_K(a)$ to denote the minimal subset of $G$ containing all sufficiently long commutators $[x,a,a,\dots,a]$ where $x\in K$.

\begin{lemma} \label{nach} Let $G=H\langle a\rangle$ be a group with a normal abelian subgroup $H$. Then $a$ is an almost right Engel element if and only if it is an almost left Engel element. In that case ${\mathcal L}(a)={\mathcal R}(a)$.
\end{lemma}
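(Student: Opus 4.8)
The plan is to work additively, regarding $H$ as a module over the group ring of $\langle a\rangle$; write $\alpha$ for the automorphism of $H$ induced by conjugation by $a$, and set $\beta=\alpha-1$, an endomorphism of $H$. First I would record two elementary computations. For $z\in G$ one has $[z,{}_n\,a]=\beta^{n}(h)$ for a suitable $h\in H$ depending on $z$, and conversely every $h\in H$ arises this way (take $z=h$); hence $a$ is almost left Engel exactly when the subgroup
\[
F:=\{\,h\in H:\ \beta^{k}h=h\ \text{for some}\ k\ge1\,\}
\]
is finite and every $\beta$-orbit $\{\beta^{n}h:n\ge1\}$ in $H$ is finite, and in that case ${\mathcal L}(a)=F$. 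Secondly, every element of $G$ has the form $a^{i}x$ with $i$ an integer and $x\in H$, and, since $H$ is abelian (so that after the first commutation one stays inside $H$, where subsequent commutation with $a^{i}x$ acts through $\alpha^{i}$ alone), one obtains $[a,{}_n\,(a^{i}x)]=(\alpha^{i}-1)^{n-1}([a,x])$ for $n\ge1$. The case $i=1$ is the identity $[a,{}_n\,(ax)]=[x^{-1},{}_n\,a]$ used in the proof of Lemma~\ref{s11}.

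The $i=1$ identity would settle one implication and one inclusion of sinks at once. If ${\mathcal R}(a)$ is finite, then for any $z\in G$ the sequence $([z,{}_n\,a])_{n}$ coincides with $([a,{}_n\,(ax)])_{n}$ for the appropriate $x\in H$, so it is eventually inside ${\mathcal R}(a)$; hence ${\mathcal R}(a)$ is a finite left Engel sink, $a$ is almost left Engel, and ${\mathcal L}(a)\subseteq{\mathcal R}(a)$. In the other direction, if $a$ is almost left Engel then ${\mathcal L}(a)=F$, and for $f\in F$ we have $\beta^{k}f=f$ for some $k\ge1$, so $f=[a,{}_{mk}\,(ax)]$ with $x^{-1}=f$ for all $m\ge1$; thus $f\in{\mathcal R}(a)$ and $F\subseteq{\mathcal R}(a)$. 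So the whole statement reduces to: \emph{if $a$ is almost left Engel, then $F={\mathcal L}(a)$ is also a right Engel sink of $a$}; by the formula for $[a,{}_n\,(a^{i}x)]$, this means showing that for every integer $i$ and every $v\in H$ the sequence $(\alpha^{i}-1)^{n}(v)$ eventually falls into the finite set $F$.

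The main step would be a module decomposition. Assuming $a$ almost left Engel: $F$ is a finite $\beta$-invariant subgroup on which $\beta$ acts injectively (because $F$ meets $H_{0}:=\bigcup_{n}\ker\beta^{n}$ trivially), so $\beta^{D}=1$ on $F$ for some $D\ge1$ and $F=\ker(\beta^{D}-1)$. On $H_{0}$ the operator $\beta^{D}$ is locally nilpotent, hence $\beta^{D}-1$ is invertible there; and since $\beta^{n}(h)\in F$ for $n$ large, $(\beta^{D}-1)(h)\in\ker\beta^{n}\subseteq H_{0}$ for every $h\in H$. With $F\cap H_{0}=0$ this gives $H=F\oplus H_{0}$, a decomposition preserved by every power of $\alpha$. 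Now for each integer $i$ the operator $\alpha^{i}-1$ is locally nilpotent on $H_{0}$: from $\alpha^{i}-1=\beta\,p_{i}(\beta)$ for $i\ge1$ (and $\alpha^{i}-1=-\alpha^{i}(\alpha^{-i}-1)$ for $i<0$) one sees that every element of $H_{0}$ is annihilated by a sufficiently high power of $\alpha^{i}-1$. Writing $[a,x]=f+h_{0}$ with $f\in F$, $h_{0}\in H_{0}$, the $H_{0}$-component $(\alpha^{i}-1)^{n-1}(h_{0})$ dies for large $n$, while the $F$-component $(\alpha^{i}-1)^{n-1}(f)$ stays inside the finite set $F$; hence $[a,{}_n\,(a^{i}x)]\in F$ for all large $n$, so $F$ is a right Engel sink and ${\mathcal R}(a)\subseteq F={\mathcal L}(a)$. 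Together with the earlier inclusion this gives ${\mathcal R}(a)={\mathcal L}(a)$. I expect the hard part to be exactly this last step for the elements $a^{i}x$ with $i\ne0,1$: the cheap identity only reaches the coset $aH$, there is no similarly easy way to compare $\alpha^{i}-1$ with $\alpha-1$ on all of $H$, and it is the splitting $H=F\oplus H_{0}$ --- reducing matters to a finite group on $F$ and to local nilpotence on $H_{0}$ --- that makes the comparison work.
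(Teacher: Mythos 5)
Your argument is correct, and at the decisive point it takes a genuinely different route from the paper. The paper's own proof is two sentences: the commutator identity from Lemma~\ref{s11} (your case $i=1$), which matches the sequences $[a,{}_n\,(ax)]$ with the left-hand sequences $[x^{-1},{}_m\,a]$ on the coset $aH$, plus the assertion that both minimal sinks coincide with the intersection of all normal subgroups of $G$ with locally nilpotent quotient; the troublesome cosets $a^iH$ with $i\neq 0,1$ are thus absorbed into that group-theoretic characterization rather than computed. You instead supply the explicit module-theoretic content: $\mathcal{L}(a)=F$, the set of $\beta$-periodic points; the splitting $H=F\oplus H_0$ with $H_0=\bigcup_n\ker\beta^n$ (a finite part on which $\beta$ is invertible of finite order, plus a locally $\beta$-nilpotent part), both summands invariant under all powers of $\alpha$; and the fact that $\alpha^i-1$ is locally nilpotent on $H_0$ for every integer $i$, since in the commutative subring generated by $\alpha^{\pm1}$ it is $\beta$ times a unit-adjusted polynomial in $\beta$. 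That is exactly what is needed to control $[a,{}_n\,(a^ix)]=(\alpha^i-1)^{n-1}([a,x])$ on every coset --- the step the paper leaves implicit --- and your route has the bonus of exhibiting the common sink as the finite subgroup of $\beta$-periodic points, whereas the paper's version is shorter and phrased through the locally-nilpotent-quotient viewpoint. Two small polish items: in deducing $H=F\oplus H_0$, write the one line that makes the sum part explicit (choose $h_0\in H_0$ with $(\beta^D-1)h_0=(\beta^D-1)h$, so $h-h_0\in\ker(\beta^D-1)=F$); and your separate verification that $F\subseteq\mathcal{R}(a)$ is redundant once the two inclusions $\mathcal{L}(a)\subseteq\mathcal{R}(a)$ and $\mathcal{R}(a)\subseteq F=\mathcal{L}(a)$ are both in hand.
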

\begin{proof} The lemma follows from the fact that if $x\in H$, for each $n$ we have $[a,{}_n\,(ax)]=[x^{-1},{}_{n+1}\,a]$. It is easy to see that the intersection of all normal subgroups of $G$ whose quotients are locally nilpotent is ${\mathcal L}(a)$ as well as ${\mathcal R}(a)$.
\end{proof}
A well-known theorem of Baer states that if, for a group $G$ and a positive integer $k$, the quotient $G/Z_k(G)$ is finite, then so is $\gamma_{k+1}(G)$ (see \cite[14.5.1] {rob}). Here $\gamma_{k+1}(G)$ denotes the term of the lower central series of $G$. Recently, the following related result was obtained in \cite{degiova} (see also \cite{kurda}).

\begin{theorem}\label{sysak} Let $G$ be a group and let $H$ be the hypercenter of $G$. If $G/H$ is finite, then $G$ has a finite normal subgroup $N$ such that $G/N$ is hypercentral.
\end{theorem}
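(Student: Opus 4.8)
The plan is to combine Baer's theorem with an induction on the order of the finite group $\bar G=G/H$. First, if the upper central series of $G$ reaches $H$ after finitely many steps, say $H=Z_k(G)$, then $G/Z_k(G)=G/H$ is finite, so Baer's theorem gives that $\gamma_{k+1}(G)$ is finite; taking $N=\gamma_{k+1}(G)$, the quotient $G/N$ is nilpotent of class at most $k$, hence hypercentral. This disposes of the ``short'' case and, in particular, starts the induction when $\bar G=1$.

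For the general case the decisive step is the following reduction: \emph{it suffices to exhibit a single finite normal subgroup $N_0$ of $G$ that is not contained in $H$}. Indeed, since $H$ is the hypercenter of $G$, the image of $H$ in $G/N_0$, which equals $HN_0/N_0$, lies in the hypercenter of $G/N_0$; and because $N_0\not\le H$, the quotient $G/HN_0$ is a \emph{proper} homomorphic image of $\bar G$. Hence $G/N_0$ again satisfies the hypotheses of the theorem, now with a finite quotient of strictly smaller order than $\bar G$, and by the inductive hypothesis $G/N_0$ has a finite normal subgroup with hypercentral quotient; its full preimage in $G$ is then a finite normal subgroup $N$ (an extension of $N_0$ by a finite group) with $G/N$ hypercentral. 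Conversely, such an $N_0$ must exist whenever $G\ne H$, since any finite normal $N$ with $G/N$ hypercentral satisfies $N\not\le H$ (otherwise the hypercenter of $G/N$ would be $H/N\ne G/N$).

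So assume $G\ne H$ and try to build $N_0$. Put $C=C_G(H)$. As $C\cap H=Z(H)$ is central in $C$ and $C/Z(H)$ embeds in the finite group $\bar G$, the group $C$ is central-by-finite, so by Schur's theorem $C'=[C,C]$ is finite; being characteristic in $C$, it is normal in $G$. If $C'\not\le H$ we are done by the reduction above; this covers, for instance, the case in which $CH/H$ is non-abelian. The remaining configurations — $C\le H$ (i.e. $C_G(H)=Z(H)$), and $C\not\le H$ with $C'\le H$ (so $CH/H$ is a non-trivial \emph{abelian} normal subgroup of the centreless group $\bar G$, and $C$ is a nilpotent $FC$-group of class at most $2$) — are where the real work lies. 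In the first one exploits the faithful action of $G/Z(H)$ on $H$, together with the fact that $H$ is the hypercenter, to force either a contradiction or a collapse to a smaller $\bar G$; in the second one must extract a finite $G$-invariant subgroup outside $H$ from the torsion subgroup $T(C)$, analysing layer by layer the action of $G$ on the abelian factors near the top of the upper central series of $H$ and applying Baer's theorem to those factors.

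I expect this last extraction — controlling the interaction between the finite quotient $G/H$ and the (possibly long) infinite upper-central layers of $H$ while keeping the resulting normal subgroup finite — to be the main obstacle, the induction and the reduction above being the routine part.
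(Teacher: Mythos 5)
This statement is not proved in the paper at all: it is quoted as Theorem~\ref{sysak} from \cite{degiova} (see also \cite{kurda}), so there is no internal argument to compare with; the comparison has to be with what a complete proof requires, and your proposal does not supply it. Your routine parts are indeed fine: the case $H=Z_k(G)$ via Baer, and the reduction ``find a finite normal $N_0\not\le H$, pass to $G/N_0$, induct on $|G/H|$'' (using that the image of the hypercenter lies in the hypercenter of the quotient) are correct. But the theorem \emph{is} the existence of such an $N_0$, and that is exactly what you leave open. Your only concrete production of a finite normal subgroup is $C'$ for $C=C_G(H)$, via Schur; in the generic situation this gives nothing. For instance, if $H$ is abelian and self-centralizing (a perfectly typical configuration, e.g.\ when the hypercenter is an abelian group on which $G/H$ acts faithfully), then $C=H$ and $C'=1$, and you are left with precisely the ``remaining configurations'' for which you offer only a programme (``exploit the faithful action\dots force a contradiction or a collapse'', ``extract a finite $G$-invariant subgroup from $T(C)$, analysing layer by layer'').

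That missing step is not a technicality. The hypercenter may be attained only at a limit ordinal ($\omega$ or beyond), so no finite section $Z_{k+1}(G)/Z_k(G)$ controls the situation, and Baer's theorem applied to upper-central layers does not hand you a \emph{finite normal subgroup of $G$ outside $H$}: the subgroups it produces live inside the layers (hence inside $H$), while the candidates outside $H$, such as terms $\gamma_{k+1}(G)$, need not be finite for any $k$ even when the conclusion of the theorem holds (a hypercentral non-nilpotent group already has all $\gamma_k$ nontrivial, and in the mixed case they are typically infinite). Producing a finite $G$-invariant ``torsion piece'' that survives outside $H$ is where De Falco--de Giovanni--Musella--Sysak do their actual work, and your sketch gives no mechanism that forces such a piece to exist — indeed nothing in your outline yet rules out a counterexample in the case $C_G(H)=Z(H)$. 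So the proposal, as it stands, proves only the reduction and the nilpotent-type special case, not the theorem.
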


\begin{proposition}\label{hyperce} Let $G=H\langle a\rangle$ where $H$ is a normal hypercentral subgroup and $a$ is an almost right Engel element. Then $G$ is finite-by-hypercentral.
\end{proposition}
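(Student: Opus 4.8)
The plan is to produce a finite normal subgroup $N$ of $G$ with $G/N$ hypercentral; by Theorem \ref{sysak} this is equivalent to showing that the hypercenter $Z_\infty(G)$ has finite index in $G$, and I will use whichever formulation is handier. Two preliminary remarks. Since $H\trianglelefteq G$, for every $x\in G$ we have $[a,x]\in H$ (writing $x=ha^k$ gives $[a,x]=[a,h]^{a^k}\in H$), so $[a,{}_n x]\in H$ for all $n$ and ${\mathcal R}(a)$ is a finite subset of $H$. Also, the terms $Z_i(H)$ of the (possibly transfinite) upper central series of $H$ are characteristic in $H$, hence normal in $G$, and form a $G$-invariant ascending series of $H$ with abelian factors.

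I first treat the case in which $H$ is abelian. By Lemma \ref{nach}, $a$ is then also almost left Engel and ${\mathcal L}(a)={\mathcal R}(a)$. Writing the action of $a$ on $H$ additively as the operator $T=a-1$, one has $[h,{}_n a]=T^n h$, so for each $h$ the sequence $(T^n h)_n$ assumes only finitely many values and is eventually periodic, landing eventually in ${\mathcal R}(a)$. Using Lemma \ref{nach} to realize each $T$-cycle as such a sequence, together with minimality of the sink, one identifies ${\mathcal R}(a)$ with the set of $T$-periodic elements $\bigcup_{p\ge1}\ker\!\big((a-1)^p-1\big)$; being a directed union of subgroups this is a finite subgroup, and it is normal in $G$. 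On $H/{\mathcal R}(a)$ the operator $T$ is locally nilpotent, i.e. $a$ acts locally unipotently; since for $\bar h\in H/{\mathcal R}(a)$ with $(a-1)^m\bar h=0$ all the commutators $[\bar h,g_1,\dots ,g_m]$ lie in the trivial subgroup $(a-1)^m\mathbb Z[a^{\pm1}]\bar h$, such $\bar h$ lies in $Z_m$ of $G/{\mathcal R}(a)$, whence $H/{\mathcal R}(a)\le Z_\infty\!\big(G/{\mathcal R}(a)\big)$. As $G/H$ is cyclic, the quotient $\big(G/{\mathcal R}(a)\big)/Z_\infty\!\big(G/{\mathcal R}(a)\big)$ is abelian with trivial centre, hence trivial; so $G/{\mathcal R}(a)$ is hypercentral and $G$ is finite-by-hypercentral in this case.

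For general hypercentral $H$ the plan is to reduce to the abelian case along the $G$-invariant series $\{Z_i(H)\}$, by transfinite induction on the hypercentral length $\lambda$ of $H$, with limit stages handled by unions. When $\lambda$ is a successor, the abelian case applies both to $G/Z_{\lambda-1}(H)$ — producing $N^*\trianglelefteq G$ with $Z_{\lambda-1}(H)\le N^*$, $N^*/Z_{\lambda-1}(H)$ finite and $G/N^*$ hypercentral — and to the normal abelian subgroup $Z(H)\langle a\rangle$ of $G$, from which one extracts that $Z(H)\cap Z_\infty(G)$ has finite index in $Z(H)$ (the locally unipotent part of $a$ on $Z(H)$ lies in $Z_\infty(G)$ by the same commutator computation as above). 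Feeding in the inductive hypothesis and using Theorem \ref{sysak} repeatedly to convert ``finite-by-hypercentral'' back into ``hypercentral modulo a finite normal subgroup'', one absorbs the finite contributions one stage at a time. It is worth recording that once a finite normal $N\supseteq{\mathcal R}(a)$ is in hand, $\bar a$ becomes a right Engel element of $\bar G=G/N$, and since $\bar G$ is hypercentral-by-cyclic, hence hyperabelian, $\bar a$ lies in $Z_\infty(\bar G)$; with $\bar G/\bar H$ abelian this again forces $\bar G=Z_\infty(\bar G)$.

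The crux, and the step I expect to be the main obstacle, is precisely this passage from abelian $H$ to hypercentral $H$. The upper central series of $H$ may admit no finite subseries, so the finitely many ``error'' subgroups arising on the individual abelian sections cannot be amalgamated naively into one finite normal subgroup of $G$; the induction must therefore introduce only a single finite normal subgroup at a time and push everything else into the hypercenter, which is exactly what Theorem \ref{sysak} (equivalently, the characterization of finite-by-hypercentral groups by finiteness of $|G:Z_\infty(G)|$) makes possible. A secondary nuisance is that, unlike in the abelian case, ${\mathcal R}(a)$ itself need not be normal in $G$ when $H$ is non-abelian, so one must work instead with a $G$-invariant subgroup assembled from the sinks of the conjugates $a^g$ and verify that it stays finite.
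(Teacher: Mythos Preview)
Your abelian case is correct and matches the paper's treatment: via Lemma \ref{nach} one has ${\mathcal L}(a)={\mathcal R}(a)$, this is the finite subgroup of $(a-1)$-periodic points of $H$, it is normal in $G$ (being $a$-invariant and central in $H$), and modulo it $a$ acts locally unipotently, so $H/{\mathcal R}(a)\le Z_\infty(G/{\mathcal R}(a))$ and $G/{\mathcal R}(a)$ is hypercentral. The paper outsources the subgroup statement to \cite[Lemma 3.6]{monat}, but your direct argument is fine.

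The reduction to the abelian case, however, is a plan with a hole you yourself flag, and the hole is genuine. Transfinite induction on the hypercentral length $\lambda$ of $H$ fails at limit ordinals: knowing that each $Z_\mu(H)\langle a\rangle$ is finite-by-hypercentral gives a finite normal $N_\mu\trianglelefteq Z_\mu(H)\langle a\rangle$ for every $\mu<\lambda$, but nothing bounds $|N_\mu|$ uniformly, so ``handled by unions'' does not go through. Even at successors the sketch is incomplete: the inductive hypothesis would naturally apply to $Z_{\lambda-1}(H)\langle a\rangle$, but that subgroup is not normal in $G$, so the finite piece it produces is not normal in $G$ either; and your proposed remedy of passing to $\bigcup_g{\mathcal R}(a)^g$ can be infinite. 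The phrase ``absorbs the finite contributions one stage at a time'' hides exactly the difficulty.

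The paper avoids all of this by inducting on $|{\mathcal R}(a)|$ rather than on the hypercentral length of $H$; this is a non-negative integer, so there are no limit stages. If ${\mathcal R}(a)=\{1\}$ then $a$ is right Engel, $a^{-1}$ is left Engel, and \cite[Lemma 3.5]{monat} gives $G$ hypercentral. Otherwise one first reduces to $Z(G)=1$ by modding out the hypercenter $T$ of $G$; in that case your abelian argument applied to $K=Z(H)\langle a\rangle$ produces a \emph{nontrivial} finite subgroup $\langle{\mathcal R}_K(a)\rangle\le Z(H)$, which is automatically normal in all of $G$, and in the quotient $|{\mathcal R}(\bar a)|$ has strictly decreased. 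Undoing the reduction to $Z(G)=1$ costs one further application of Theorem \ref{sysak} to the subgroup $N\langle a\rangle$ (where $N/T$ is the finite piece of $G/T$), yielding a finite characteristic $M\trianglelefteq G$ meeting ${\mathcal R}(a)$ nontrivially. The key structural point you are missing is that each step manufactures a finite \emph{normal-in-$G$} subgroup sitting inside $Z(H)$ (hence automatically $G$-invariant) while strictly shrinking the finite invariant $|{\mathcal R}(a)|$.
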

\begin{proof} If ${\mathcal R}(a)=1$, then $a$ is a right Engel element. In that case $a^{-1}$ is a left Engel element (see \cite[12.3.1] {rob}) and, by \cite[Lemma 3.5]{monat}, the group $G$ is hypercentral. We therefore will assume that ${\mathcal R}(a)\neq1$ and use induction on the number of elements in ${\mathcal R}(a)$.

Suppose first that Z(G)=1. Set $Z=Z(H)$ and $K=Z\langle a\rangle$. By Lemma \ref{nach}, Then ${\mathcal L}_K(a)={\mathcal R}_K(a)$. Since $Z(G)=1$, it follows that ${\mathcal L}_K(a)\neq1$. Moreover, by \cite[Lemma 3.6]{monat}, the subgroup $\langle{\mathcal L}_K(a)\rangle$ is finite. Since ${\mathcal L}_K(a)$ is contained in $Z(H)$, we observe that $\langle{\mathcal L}_K(a)\rangle$ is normal in $G$. Thus, we can pass to the quotient-group $G/\langle{\mathcal L}_K(a)\rangle$. By induction on $|{\mathcal R}(a)|$ we deduce that $G$ is finite-by-hypercentral.

Now drop the assumption that $Z(G)=1$. Let $T$ be the hypercenter of $G$. The above paragraph shows that $G/T$ is finite-by-hypercentral. If $G/T$ is hypercentral, then $G$ itself is hypercentral so without loss of generality we can assume that $G/T$ has a nontrivial finite subgroup $N/T$ such that $G/N$ is hypercentral and ${\mathcal R}(a)\leq N$. Let $m$ be a positive integer such that $a^m$ centralizes $N/T$. Then $T\langle a^m\rangle$ is contained in the hypercenter of $N\langle a\rangle$. Therefore $N\langle a\rangle$ is hypercentral-by-finite. In view of Theorem \ref{sysak} $N\langle a\rangle$ is finite-by-hypercentral and so $N\langle a\rangle$ has a characteristic finite subgroup $M$ such that $N\langle a\rangle/M$ is hypercentral. Suppose that ${\mathcal R}(a)\cap M=1$. Then $a$ is right Engel in $N\langle a\rangle$. We see that $a^{-1}$ is left Engel in $G$, whence by \cite[Lemma 3.5]{monat} the group $G$ is hypercentral. Therefore without loss of generality we can assume that $M$ is nontrivial and ${\mathcal R}(a)\cap M\neq 1$. We pass to the quotient $G/M$ and by induction on $|{\mathcal R}(a)|$ deduce that $G$ is finite-by-hypercentral.
\end{proof} 

Groups in which every finitely generated subgroup is finite are called locally finite. In general, there is no good Sylow theory for locally finite groups. However, it is easy to see that if, for some prime $p$, a locally finite group $G$ has at least one finite maximal $p$-subgroup, then all $p$-subgroups of $G$ are finite and the maximal ones are conjugate. We therefore can talk about locally finite groups with finite Sylow $p$-subgroups. In what follows we denote by $O(G)$ the maximal normal subgroup of $G$ that consists of elements of odd order. Of course, the Feit-Thompson theorem \cite{fetho} implies that a locally finite group in which all elements have odd order is locally soluble. We say that a group is virtually locally soluble if it has a locally soluble subgroup of finite index. The following lemma is immediate from \cite[Theorem 3.17]{kewe}.
\begin{lemma} \label{ooo} Let $G$ be a locally finite group whose Sylow $2$-subgroups are finite. If $G$ is virtually locally soluble, then $O(G)$ has finite index in $G$.
\end{lemma}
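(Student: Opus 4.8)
The plan is to reduce to the case where $G$ is locally soluble and then to pass from the finite subgroups of $G$ to $G$ itself by means of a uniform bound on the quotients $F/O(F)$. For the reduction, since $G$ has a locally soluble subgroup of finite index, replacing it by its normal core gives a locally soluble normal subgroup $N$ of finite index; as all Sylow $2$-subgroups of $G$ are finite, so are those of $N$. The subgroup $O(N)$ is characteristic in $N$, hence normal in $G$, and consists of elements of odd order, so $O(N)\le O(G)$ by maximality and $|G:O(G)|\le|G:N|\cdot|N:O(N)|$. Hence it is enough to treat the case when $G$ is locally soluble; fix a Sylow $2$-subgroup $P$ of $G$ with $|P|=2^{n}$.

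First I would handle the finite subgroups. If $H$ is a finite soluble group whose Sylow $2$-subgroup has order at most $2^{n}$, then $\bar H=H/O(H)$ has no nontrivial normal subgroup of odd order, so the Fitting subgroup of $\bar H$ equals $O_{2}(\bar H)$, a $2$-group of order at most $2^{n}$. Solubility then gives $C_{\bar H}(O_{2}(\bar H))\le O_{2}(\bar H)$, so $\bar H/Z(O_{2}(\bar H))$ embeds in $\mathrm{Aut}(O_{2}(\bar H))$ and $|H:O(H)|=|\bar H|$ is at most a number $c=c(n)$ depending only on $n$. Since every finite subgroup $F$ of $G$ is soluble with Sylow $2$-subgroup of order at most $2^{n}$, this gives $|F:O(F)|\le c$ for all such $F$, with $c$ independent of $F$.

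The remaining step --- the passage from the finite subgroups to $G$ --- is the one I expect to be the main obstacle. Here one uses that $O(G)$ consists precisely of the elements $x\in G$ that lie in $O(F)$ for every finite subgroup $F$ containing $x$ (the nontrivial inclusion being that if $x$ has this property, then every finitely generated subgroup of the normal closure of $x$ sits inside some such $O(F)$ and hence consists of odd-order elements, so the normal closure lies in $O(G)$), together with the elementary fact that $O(F_{1})\cap F_{0}\le O(F_{0})$ whenever $F_{0}\le F_{1}$ are finite. Given $g_{0},\dots,g_{c}\in G$ and any finite $F$ containing $\langle g_{0},\dots,g_{c}\rangle$, the pigeonhole principle forces $g_{i}^{-1}g_{j}\in O(F)$ for some $i<j$; the intersection fact shows that, for a fixed pair $i<j$, the family of $F$ that validate it is closed under passing to subgroups, and since finitely many finite subgroups generate a finite subgroup, a short compactness argument then produces a single pair $i<j$ with $g_{i}^{-1}g_{j}\in O(F)$ for every finite $F$ containing $g_{i}^{-1}g_{j}$, i.e. $g_{i}^{-1}g_{j}\in O(G)$. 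Therefore $|G:O(G)|\le c$. It is exactly here that local solubility cannot be dispensed with: without it the uniform bound $c(n)$ of the previous paragraph collapses, since there are infinitely many finite simple groups --- for example $\mathrm{PSL}(2,q)$ for suitable odd $q$ --- with Sylow $2$-subgroups of order $4$. This finite-to-infinite passage is, I expect, essentially what underlies \cite[Theorem 3.17]{kewe}.
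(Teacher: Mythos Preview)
Your argument is correct. The reduction to the locally soluble case is clean; the bound $|H:O(H)|\le c(n)$ for finite soluble $H$ via $C_{\bar H}(O_2(\bar H))\le O_2(\bar H)$ is standard and correct; and the compactness step works exactly as you describe. One small point worth making explicit: after you obtain a pair $(i,j)$ with $g_i^{-1}g_j\in O(F)$ for every finite $F$ containing all of $g_0,\dots,g_c$, you still need it for every finite $F'$ containing only $g_i^{-1}g_j$; but this follows immediately by enlarging $F'$ to $\langle F',g_0,\dots,g_c\rangle$ and using the downward closure $O(F)\cap F'\le O(F')$ once more.

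By contrast, the paper gives no argument at all: it simply declares the lemma ``immediate from \cite[Theorem~3.17]{kewe}''. So you are not taking a different route so much as supplying a self-contained proof where the paper opts for a black-box citation. Your closing remark is on the mark: the uniform bound on $|F:O(F)|$ in the finite soluble case, combined with a local-to-global passage, is precisely the content behind that reference (and your observation about $\mathrm{PSL}(2,q)$ correctly isolates why the locally soluble hypothesis is essential). What the citation buys the paper is brevity; what your argument buys is that a reader need not chase the reference.
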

\begin{proposition} \label{ssss} Let $G$ be a locally finite group whose Sylow $2$-subgroups are finite and suppose that ${\mathcal R}(g)$ is finite for each $2$-element $g\in G$. Then $G$ has a finite normal subgroup $N$ such that $G/N$ consists of elements of odd order. In particular $G$ is virtually locally soluble.
\end{proposition}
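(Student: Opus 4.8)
\smallskip
\noindent\textbf{Proof proposal.}
My plan is to show that the subgroup $L$ generated by all $2$-elements of $G$ is finite. The final assertion of the Proposition is then a formality: $L$ is normal, $G/L$ consists of elements of odd order (a finite subgroup of $G/L$ is a quotient of a finite subgroup of $G$ by a subgroup containing all its $2$-elements), and once $G/L$ is known to consist of elements of odd order the Feit--Thompson theorem makes it locally soluble, after which the standard fact that a finite-by-(locally soluble) group is virtually locally soluble finishes the ``in particular''. Also note $L=\langle P^{G}\rangle$ for any Sylow $2$-subgroup $P$ of $G$.

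The first step is to show, using Section~2, that $[O(G),g]$ is finite for every $2$-element $g$. Since $\langle g\rangle$ is finite and $O(G)$ is locally finite of odd order, $O(G)$ is the directed union of the finite $\langle g\rangle$-invariant subgroups $H$ that it contains; for each such $H$ the orders $|g|$ and $|H|$ are coprime, and $\mathcal R(g)\cap H\langle g\rangle$ is a right Engel sink for $g$ in the finite group $H\langle g\rangle$, so Lemma~\ref{ssss12} bounds $|[H,g]|$ in terms of $|\mathcal R(g)|$ only, independently of $H$. Hence the $[H,g]$ form a directed family of subgroups of bounded order, and their union $[O(G),g]$ is finite. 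Consequently $C_{O(G)}(g)$ has finite index in $O(G)$, because $x\mapsto[x,g]$ induces an injection from the right cosets of $C_{O(G)}(g)$ into the finite set $[O(G),g]$.

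The crucial step, which I expect to be the main obstacle, is to prove that $G$ is virtually locally soluble; Lemma~\ref{ooo} will then give that $O(G)$ has finite index in $G$, so that $G/O(G)$ is finite. Since $O(G)$ is already locally soluble by the Feit--Thompson theorem, everything hinges on $G/O(G)$, and here one has to invoke the structure theory of locally finite groups with finite Sylow $2$-subgroups — such a group has no infinite simple section, and its finite simple sections have Sylow $2$-subgroups of bounded order — perhaps together with the finiteness of the subgroups $[O(G),g]$ obtained above, in order to exclude the configurations that obstruct almost local solubility. Making this rigorous, and isolating exactly how much of the hypothesis it uses, looks to me the delicate point; the rest is coprime calculus and Schur's theorem.

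Granting that $G/O(G)$ is finite, I would conclude as follows. Set $O(L)=L\cap O(G)$; it is locally soluble of odd order, and $L/O(L)$ embeds in $G/O(G)$, so it is finite. Fix a Sylow $2$-subgroup $P_L$ of $L$; it is finite, and $C_{O(L)}(P_L)=\bigcap_{h\in P_L}C_{O(L)}(h)$ is an intersection of finitely many subgroups of finite index in $O(L)$, hence has finite index in $O(L)$ and so in $L$. Because $[L:C_{O(L)}(P_L)]$ is finite, $C_{O(L)}(P_L)$ has only finitely many $L$-conjugates, and since every $2$-element of $L$ lies in a conjugate of $P_L$ and $L$ is generated by its $2$-elements we have $\langle P_L^{\,L}\rangle=L$; therefore the intersection $R$ of all those conjugates equals $C_{O(L)}(L)=O(L)\cap Z(L)$. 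Thus $R$ is a central subgroup of $L$ of finite index, so Schur's theorem yields $|L'|<\infty$. Finally $L/L'$ is an abelian torsion group generated by $2$-elements, hence an abelian $2$-group, and as conjugation is trivial modulo $L'$ it equals $P_LL'/L'$, a quotient of the finite group $P_L$; so $L/L'$ is finite and therefore $L$ is finite. Taking $N=L$ completes the argument.
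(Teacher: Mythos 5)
Your first step (finiteness of $[O(G),g]$ for every $2$-element $g$, via a directed union of finite $\langle g\rangle$-invariant subgroups and Lemma \ref{ssss12}) is sound, and your endgame — granting that $G/O(G)$ is finite, showing that the subgroup $L$ generated by all $2$-elements is centre-by-finite, hence finite by Schur's theorem — also works. But the step you defer, proving that $G$ is virtually locally soluble (equivalently, by Lemma \ref{ooo}, that $G/O(G)$ is finite), is not a technical obstacle to be outsourced to general structure theory: it is the entire content of the proposition, and the structure theory you hope to invoke is false as stated. A locally finite group with finite Sylow $2$-subgroups \emph{can} have an infinite simple section: take $F=\bigcup_k GF(3^{3^k})$, an infinite locally finite field all of whose finite subfields $GF(3^d)$ have $3^d\equiv 3 \pmod 8$; then $PSL_2(F)$ is an infinite simple locally finite group whose Sylow $2$-subgroups are Klein four groups, and it is certainly not virtually locally soluble. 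Moreover $O(PSL_2(F))=1$, so the finiteness of the subgroups $[O(G),g]$ from your first step gives no leverage there. Thus the hypothesis that ${\mathcal R}(g)$ is finite for $2$-elements must enter precisely at this point (in $PSL_2(F)$ an involution inverts an infinite odd-order torus, so by Lemma \ref{s11} its sink is infinite), and your sketch gives no mechanism for it to do so.

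The paper closes exactly this gap by induction on the order of a Sylow $2$-subgroup $S$. By induction applied to $C_G(x)/\langle x\rangle$ for a nontrivial $2$-element $x$, the centralizer $C_G(x)$ is virtually locally soluble, so Lemma \ref{ooo} and Lemma \ref{ssss12} show that $C_G(x)$ contains only finitely many $2$-elements. Then, for an involution $a\in S$, the set $Y$ of $2$-elements inverted by $a$ is shown to be finite (a maximal-order argument using the finiteness of $2$-elements in $C_G(b)$ for $b\in Y$). Since every commutator $[g,a]$ is inverted by $a$, its odd part lies in ${\mathcal R}(a)$ by Lemma \ref{s11} and its $2$-part lies in $Y$, so there are only finitely many commutators $[g,a]$; hence $C_G(a)$ has finite index, Dicman's lemma \cite[14.5.7]{rob} makes $\langle a^G\rangle$ finite, and one passes to $G/\langle a^G\rangle$ and applies the induction hypothesis. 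Some version of this interplay between the sink hypothesis and the inverted elements is unavoidable, and it is exactly what is missing from your proposal.
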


\begin{proof} Choose a Sylow 2-subgroup $S$ in $G$. We will use induction on $|S|$. Since the case $S=1$ is self-evident, assume that $S\neq1$. By induction, the result holds for $C_G(x)/\langle x\rangle$ whenever $x$ is a nontrivial 2-element. It follows that $C_G(x)$ is virtually locally soluble. Choose a nontrivial element $x\in S$. Let $C=C_G(x)$, and let $D$ be a Sylow 2-subgroup of $C$. By Lemma \ref{ooo}, $O(C)$ has finite index in $C$. Lemma \ref{ssss12} shows that $[O(C),D]$ is finite. It follows that $C$ has only finitely many Sylow 2-subgroups and hence only finitely many 2-elements. Thus, an arbitrary nontrivial element $x\in S$ centralizes only finitely many 2-elements. 

Choose an involution $a\in S$ and denote by $Y$ the set of 2-elements $y\in G$ such that $y^a=y^{-1}$. We already know that $Y$ contains only finitely many involutions. We will now show that actually the set $Y$ is finite. Suppose that $Y$ is infinite. Let $k$ be the maximum number such that $Y$ contains only finitely many elements of order $k$. Such $k$ exists because the Sylow 2-subgroups in $G$ are finite and therefore of finite exponent. Since $Y$ is infinite, there is an element $b\in Y$ of order $k$ and infinitely many elements $u\in Y$ such that $u^2=b$. This is a contradiction since $C_G(b)$ contains only finitely many 2-elements. Thus, we have shown that $Y$ is finite.

Remark that $[g,a]^a=[g,a]^{-1}$ for any $g\in G$. By Lemma \ref{s11}, any odd order element $h$ such that $h^a=h^{-1}$ lies in ${\mathcal R}(a)$. Therefore every commutator of the form $[g,a]$ can be written as a product $[g,a]=ry$ where $r\in{\mathcal R}(a)$ and $y\in Y$. Since both $Y$ and ${\mathcal R}(a)$ are finite, there are only finitely many such commutators and therefore $C_G(a)$ has finite index in $G$. Dicman's lemma \cite[14.5.7]{rob} now states that $\langle a^G\rangle$ is finite. We pass to the quotient $G/\langle a^G\rangle$ and apply the induction hypothesis.
\end{proof}
\section{Proof of Theorem \ref{main}}

Linear groups are naturally equipped with the Zariski topology. If $G$ is a linear group, the connected component of $G$ containing 1 is denoted by $G^0$. We will use (sometimes implicitly) the following facts on linear groups. All these facts are well-known and are provided here just for the reader's convenience.

\begin{itemize}
\item If $G$ is a linear group and $N$ a normal subgroup which is closed in the Zarisky topology, then $G/N$ is linear (see \cite[Theorem 6.4]{wehr}).

\item Since finite subsets of $G$ are closed in the Zariski topology, it follows that any finite subgroup of a linear group is closed. Hence $G/N$ is linear for any finite normal subgroup $N$.

\item If $G$ is a linear group, the connected component $G^0$ has finite index in $G$ (see \cite[Lemma 5.3]{wehr}).

\item Each finite conjugacy class in a linear group centralizes $G^0$ (see \cite[Lemma 5.5]{wehr}).

\item In a linear group any descending chain of centralizers is finite. 

\item A linear group generated by normal nilpotent subgroups is nilpotent (see Gruenberg \cite{grue}).

\item Tits alternative: A finitely generated linear group either is virtually soluble or contains a subgroup isomorphic to a nonabelian free group (see \cite{tits}).

\item The Burnside-Schur theorem: A periodic linear group is locally finite (see \cite[9.1]{wehr}).

\item Zassenhaus theorem: A locally soluble linear group is soluble. Every linear group contains a unique maximal soluble normal subgroup (see \cite[Corollary 3.8]{wehr}).

\item Since the closure in the Zariski topology of a soluble subgroup is again soluble (see \cite[Lemma 5.11]{wehr}), it follows that the unique maximal soluble normal subgroup of a linear group is closed. In particular, if $G$ is linear and $R$ is the unique maximal soluble normal subgroup of $G$, then $G/R$ is linear and has no nontrivial normal soluble subgroups.

\item Gruenberg theorem: The set of left Engel elements in a linear group $G$ coincides with the Hirsch-Plotkin radical of $G$. The set of right Engel elements coincides with the hypercenter of $G$ (see \cite{grue}).
\end{itemize}

\noindent Here, as usual, the Hirsch-Plotkin radical of a group is the maximal normal locally nilpotent subgroup.

It will be convenient first to prove the theorem in the particular case where the group is virtually soluble.

\begin{lemma}\label{nilpott} A virtually soluble linear all of whose elements are almost right Engel is finite-by-hypercentral.
\end{lemma}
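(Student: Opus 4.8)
The plan is to reduce to a situation where Proposition \ref{hyperce} applies, using the structure theory of virtually soluble linear groups together with the coprime arguments of Section 2. Let $G$ be a virtually soluble linear group all of whose elements are almost right Engel. Since $G$ is virtually soluble and linear, it is soluble-by-finite; by the Tits alternative and the Burnside--Schur theorem there is no free nonabelian subgroup, and the soluble radical $R$ is closed, so $G/R$ is linear with no nontrivial normal soluble subgroup. As $G/R$ is virtually soluble this forces $G/R$ to be finite. Thus it suffices to work inside a soluble normal subgroup of finite index, and after replacing $G$ by a suitable overgroup we may assume $G$ itself is soluble; the connected component $G^0$ is then a soluble normal subgroup of finite index, and $G^0$ is closed and connected.

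Next I would argue that $G$ has a normal hypercentral subgroup $H$ with $G/H$ finite. The key point is to produce a normal nilpotent (or hypercentral) subgroup $H$ of $G$ of finite index so that Proposition \ref{hyperce} can be applied repeatedly. For this I would use that a linear group generated by normal nilpotent subgroups is nilpotent, together with the fact that in a linear group descending chains of centralizers terminate. Concretely: let $F$ be the Fitting subgroup of $G^0$ (the product of all normal nilpotent subgroups of $G^0$); since $G^0$ is soluble linear and connected, $F$ is nilpotent and $C_{G^0}(F)\le F$. The quotient $G^0/F$ acts faithfully on $F$; I want to bound its structure using the almost right Engel hypothesis. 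The idea is that an element $g$ of $G$ acting on the abelian sections of $F$ with a finite right Engel sink acts, by Lemma \ref{s12} and Lemma \ref{ssss12}, with $[\bar F,g]$ finite on each finite coprime section, which should force $g$ to act on $F$ with bounded "non-central" part, and hence $G/F$ to be, modulo a finite normal subgroup, trivial on $F$ — i.e. contained (mod finite) in $F\cdot C_G(F)=F$. Iterating up a suitable central-by-finite series of $F$ then shows $G$ is (nilpotent)-by-finite.

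Once $G$ has a normal nilpotent (hence hypercentral) subgroup $H$ of finite index, I would finish by induction on the index $|G:H|$. If $G=H$ we are done since nilpotent groups are hypercentral. Otherwise pick $g\in G\setminus H$ with $g^p\in H$ for some prime $p$ (possible since $G/H$ is finite soluble, hence has a chief factor of prime order above $H$), and apply Proposition \ref{hyperce} to the subgroup $H\langle g\rangle$: it is of the form (normal hypercentral)-by-cyclic with $g$ almost right Engel, so $H\langle g\rangle$ is finite-by-hypercentral. Let $N$ be a finite characteristic subgroup of $H\langle g\rangle$ with $H\langle g\rangle/N$ hypercentral; replacing $N$ by its normal closure in $G$, which is still finite (it is a finite union of $G$-conjugates, each finite, and normal — here one uses that $N$ has boundedly many conjugates and Dietzmann's lemma, or simply that $G/H$ is finite so $N^G$ is a product of finitely many finite subgroups inside $H$ and $N$), pass to $G/N^G$. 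There the image of $H$ is still normal and hypercentral, and now $g$ centralizes $H\langle g\rangle/N$ modulo hypercenter, so the image of $H\langle g\rangle$ is hypercentral; thus the image of $H$ together with $g$ generates a hypercentral normal subgroup of strictly larger order than $H$'s image, contradicting maximality unless we have already reached the whole group. Formally, induct on $|G:H|$ after enlarging $H$.

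The main obstacle I anticipate is the middle paragraph: showing that the almost right Engel condition forces $G$ to be nilpotent-by-finite, i.e. controlling the action of $G$ on the Fitting subgroup. The delicate point is passing from "bounded commutator subgroup on each finite coprime section" (which is what Lemmas \ref{s12} and \ref{ssss12} give) to a genuine structural bound on the whole action on the (possibly infinite, non-torsion) Fitting subgroup of a connected soluble linear group; this requires exploiting connectedness (so that the unipotent radical and the torus part behave well), the finiteness of centralizer chains, and perhaps reducing to the case of $G$ acting on a finite-dimensional vector space where the almost right Engel condition on a semisimple element $g$ says its eigenvalue $1$-eigenspace has bounded codimension. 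Handling the unipotent part separately — where "almost right Engel" should already force near-triviality of the commutator action because unipotent actions cannot have finite nontrivial Engel sinks without being trivial — and then combining with the torus part is where the real work lies.
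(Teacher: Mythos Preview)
The genuine gap is your middle paragraph: you never actually establish that $G$ is nilpotent-by-finite, and the route you sketch via the Fitting subgroup $F$ of $G^0$ together with Lemmas \ref{s12} and \ref{ssss12} does not go through. Those lemmas concern \emph{finite} groups under \emph{coprime} action; in a connected soluble linear group there is no coprimeness between $G^0/F$ and $F$, and $F$ is typically not torsion (it contains the unipotent radical and a torus). Your suggestion to treat unipotent and toral parts separately via eigenvalue arguments is the beginning of a different and much longer programme, not a completion of this one; as you yourself say, this is ``where the real work lies'', and none of that work is done.

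The paper sidesteps this entirely by inducting on the \emph{derived length} of a soluble normal subgroup $S$ of finite index, rather than trying to descend to the Fitting subgroup. Assuming by induction that $S'$ is finite-by-hypercentral, one passes to a quotient and takes $S'$ hypercentral; then Proposition \ref{hyperce} applies directly to each $\langle S',x\rangle$, yielding a finite characteristic subgroup $R_x$ with hypercentral quotient. The decisive idea you are missing is this: for $x\in S$ the subgroup $\langle S',x\rangle$ is normal in $S$, so every element of $R_x$ has only finitely many $S$-conjugates, hence only finitely many $G$-conjugates, and therefore $G^0$ centralizes $R_x$. Consequently, for $x\in S\cap G^0$ the element $x$ is genuinely right Engel in $\langle S',x\rangle$, whence (via \cite[12.3.1]{rob} and \cite[Lemma 3.5]{monat}) $\langle S',x\rangle$ is hypercentral. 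The product of these normal hypercentral subgroups over all $x\in S\cap G^0$ is locally nilpotent, hence hypercentral in a linear group, so $N=S\cap G^0$ is hypercentral. This is how one reaches hypercentral-by-finite without any coprime-action or Fitting-subgroup analysis.

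Once $N$ is a hypercentral normal subgroup of finite index, your induction-on-index idea is in the right spirit but more complicated than necessary. The paper simply observes that there are only finitely many subgroups of the form $\langle N,x\rangle$, takes the (finite, normal) product $N_0$ of all their finite obstruction subgroups $Q_x$, passes to $G/N_0$, and then every $\langle N,x\rangle$ is hypercentral, so $N$ consists of right Engel elements, lies in the hypercenter of $G$ by Gruenberg's theorem, and Theorem \ref{sysak} finishes.
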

\begin{proof} Suppose that $G$ is a virtually soluble linear group all of whose elements are almost right Engel. Let $S$ be a normal soluble subgroup of finite index in $G$. By induction on the derived length of $S$ we assume that $S'$ is finite-by-hypercentral. Therefore $S'$ has a finite characteristic subgroup $M$ such that $S'/M$ is hypercentral. Passing to the quotient $G/M$ without loss of generality we can assume that $S'$ is hypercentral. By Proposition \ref{hyperce} the subgroup $\langle S',x\rangle$ is finite-by-hypercentral for each $x\in G$. Thus, for each $x\in G$ there exists a finite characteristic subgroup $R_x\leq\langle S',x\rangle$ such that $\langle S',x\rangle/R_x$ is hypercentral. Since $\langle S',x\rangle$ is normal in $S$ whenever $x\in S$, it follows that each element in $R_x$ has centralizer of finite index in $S$, hence centralizer of finite index in $G$. Therefore $G^0$ centralizes $R_x$. It follows that whenever $x\in G^0$, the element $x$ is a right Engel element in $\langle S',x\rangle$. In that case $x^{-1}$ is a left Engel element (see \cite[12.3.1] {rob}) and, by \cite[Lemma 3.5]{monat}, the group $\langle S',x\rangle$ is hypercentral. Further, we observe that the subgroup $\prod\langle S',x\rangle$, where $x$ ranges over $S\cap G^0$, is locally nilpotent and therefore hypercentral because linear locally nilpotent groups are hypercentral. In particular, $N=S\cap G^0$ is hypercentral and so $G$ is virtually hypercentral. By Proposition \ref{hyperce} the subgroup $\langle N,x\rangle$ is finite-by-hypercentral for each $x\in G$. In other words, for each $x\in G$ there exists a finite characteristic subgroup $Q_x\leq\langle N,x\rangle$ such that the quotient $\langle N,x\rangle/Q_x$ is hypercentral. Since $N$ has finite index in $G$, it follows that $G$ contains only finitely many subgroups of the form $\langle N,x\rangle$. Set $N_0=\prod_{x\in G}Q_x$. We see that $N_0$ is a finite normal subgroup. Pass to the quotient $G/N_0$. Now the subgroup $\langle N,x\rangle$ is hypercentral for each $x\in G$. It follows that $N$ consists of right Engel elements and so, by a result of Gruenberg, $N$ is contained in the hypercenter of $G$. Theorem \ref{sysak} now states that $G$ is finite-by-hypercentral, as required. 
\end{proof}

In general, an extension of a locally soluble group by another locally soluble group need not be locally soluble. However for linear groups we have the following lemma.

\begin{lemma}\label{seri} Let $G$ be a linear group having a finite normal series all of whose quotients are virtually locally soluble. Then $G$ is virtually soluble.
\end{lemma}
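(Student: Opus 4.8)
The plan is to reduce Lemma \ref{seri} to the known structure theory of linear groups by eliminating, step by step, the ``locally'' in ``virtually locally soluble''. The key observation is that for linear groups local solubility upgrades to solubility: by the Zassenhaus theorem quoted above, a locally soluble linear group is soluble, and moreover every linear group has a \emph{closed} unique maximal soluble normal subgroup with soluble-free quotient. So the real content is to show that a finite normal series of a linear group with virtually locally soluble factors forces the whole group to be virtually soluble.

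First I would set up induction on the length $r$ of the series $1=G_0\trianglelefteq G_1\trianglelefteq\dots\trianglelefteq G_r=G$. For $r=1$ the group $G$ is itself virtually locally soluble, hence (by Zassenhaus applied to its finite-index locally soluble subgroup) virtually soluble. For the inductive step, let $N=G_{r-1}$, so $N$ is a normal subgroup of $G$ carrying a series of length $r-1$ with virtually locally soluble factors, while $G/N$ is virtually locally soluble. By induction $N$ is virtually soluble; let $S$ be its maximal soluble normal subgroup, which is characteristic in $N$ hence normal in $G$, and is closed in the Zariski topology. Passing to $G/S$, which is again linear, we may assume $N$ itself is a \emph{finite} group — indeed a virtually soluble linear group modulo its maximal soluble normal subgroup is finite, since it has no nontrivial soluble normal subgroup and is virtually soluble, forcing the finite-index soluble subgroup to be trivial and the group to be finite. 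Now $N$ is finite and normal in $G$; since finite normal subgroups of linear groups are closed, $G/N$ is linear, and it is virtually locally soluble, hence virtually soluble. Therefore $G/N$ has a normal soluble subgroup $H/N$ of finite index. Then $H$ is (finite)-by-soluble with $H/N$ soluble, so $H$ is soluble-by-finite; being linear, $H$ is in fact virtually soluble, and as $H$ has finite index in $G$ we conclude $G$ is virtually soluble.

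The point requiring care is the claim that ``virtually soluble linear modulo maximal soluble normal subgroup is finite,'' which one should spell out: if $G$ is linear and virtually soluble with no nontrivial soluble normal subgroup, then the soluble subgroup $T$ of finite index contains a soluble \emph{normal} subgroup of finite index (the intersection of its finitely many conjugates, which is normal and of finite index and soluble), hence that normal subgroup is trivial, so $G$ is finite. A second point is that when we pass from $G$ to $G/S$ we must know $G/S$ is still linear and still satisfies the hypothesis of the lemma; linearity follows because $S$ is closed, and the hypothesis is inherited because the images of the $G_i$ give a series of $G/S$ whose factors are quotients of the original factors, hence still virtually locally soluble.

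The main obstacle I anticipate is purely bookkeeping: making sure at each stage that the subgroups we quotient by are genuinely closed (so that linearity is preserved) and that the series hypothesis survives the quotient. The substantive mathematical input — that local solubility becomes solubility for linear groups, and that a linear group has a closed maximal soluble normal subgroup — is exactly what is recorded in the list of standard facts above, so once the induction is organized correctly there is nothing deep left to do. An alternative route would avoid the upper-central/closure gymnastics by invoking the Tits alternative directly: if $G$ were not virtually soluble it would contain a nonabelian free subgroup $F$ (at least after passing to a finitely generated subgroup witnessing non-virtual-solubility), and one would derive a contradiction from the fact that $F$ inherits the finite normal series with virtually locally soluble — hence locally soluble on a finite-index subgroup, hence soluble on that subgroup — factors, which would make $F$ itself virtually soluble; but the clean induction above seems shorter and I would present that.
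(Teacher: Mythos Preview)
Your argument is correct and is exactly the induction-plus-Zassenhaus that the paper's one-line proof has in mind. The only point to tighten is the one you already flag: the soluble radical $S$ of $N$ is closed in $N$ but not obviously closed in $G$, so it is cleaner to pass instead to $G/R$ where $R$ is the soluble radical of $G$ itself (which contains $S$ and is closed), after which the image of $N$ is finite and the rest of your argument goes through verbatim.
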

\begin{proof} Taking into account the Zassenhaus theorem, this is just an easy induction on the length of the normal series of $G$. 
\end{proof}

We are now ready to prove Theorem \ref{main} in its full generality. For the reader's convenience we restate it here.

\begin{theorem} Let $G$ be a linear group all of whose elements are almost right Engel. Then $G$ is finite-by-hypercentral. 
\end{theorem}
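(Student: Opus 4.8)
The plan is to reduce the general linear group $G$ to the virtually soluble case already handled in Lemma~\ref{nilpott}, and for this the key is to show that $G$ itself is virtually soluble. By the Zassenhaus theorem and Lemma~\ref{seri}, it suffices to exhibit a finite normal series of $G$ whose factors are virtually locally soluble. Let $R$ be the unique maximal soluble normal subgroup of $G$; since $R$ is closed in the Zariski topology, $\bar G=G/R$ is linear with no nontrivial soluble normal subgroup, and every element of $\bar G$ is again almost right Engel (the image of a finite right Engel sink is a finite right Engel sink). So the real work is to prove that such a $\bar G$ is finite. First I would show $\bar G$ is periodic: if some $g\in\bar G$ had infinite order, then using that ${\mathcal R}(g)$ is finite one finds a power $g^n$ that is a right Engel element of $\langle \bar G, g\rangle$-type subgroups and lands in the hypercenter, forcing nilpotency phenomena incompatible with having no soluble normal subgroup unless the relevant subgroup is trivial; more robustly, I would argue that $\langle g\rangle$ meets its Engel sink so that high powers of $g$ lie in a finite set, whence $g$ has finite order. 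Once $\bar G$ is periodic, the Burnside--Schur theorem makes it locally finite.

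Next, with $\bar G$ locally finite, I would bring in Proposition~\ref{ssss}. For that I need the Sylow $2$-subgroups of $\bar G$ to be finite: this follows because a Sylow $2$-subgroup $P$ of a linear group is a linear $2$-group, hence (being periodic linear) locally finite, and in fact linear $p$-groups of finite degree have bounded class and are Chernikov; combined with the almost right Engel condition applied to a locally nilpotent $p$-group one sees $P$ must be finite (a locally finite $p$-group all of whose elements are almost right Engel, inside a linear group, cannot be infinite since Proposition~\ref{hyperce} would make $P$ finite-by-hypercentral, and an infinite hypercentral $p$-group has elements with infinite left Engel sinks, contradiction after relating left and right sinks via Lemma~\ref{nach} on abelian sections). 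Granting finite Sylow $2$-subgroups, Proposition~\ref{ssss} yields a finite normal subgroup $N_1\trianglelefteq \bar G$ with $\bar G/N_1$ of odd order, hence locally soluble by Feit--Thompson, hence (being linear) soluble by Zassenhaus. But $\bar G$ has no nontrivial soluble normal subgroup, so $N_1=1$ and $\bar G$ itself is soluble, forcing $\bar G=1$. Therefore $G=R$ is soluble, and in particular virtually soluble.

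With $G$ now known to be virtually soluble, Lemma~\ref{nilpott} applies directly and gives that $G$ is finite-by-hypercentral, completing the proof.

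I expect the main obstacle to be the finiteness of Sylow $2$-subgroups and, relatedly, ruling out infinite locally finite $p$-subgroups on which every element is almost right Engel. The delicate point is that ``almost right Engel'' is a weaker and less symmetric hypothesis than ``almost left Engel,'' so one cannot simply quote the companion paper; the bridge is Lemma~\ref{nach}, which identifies left and right sinks on abelian normal sections, together with Proposition~\ref{hyperce} to force finite-by-hypercentral structure and then derive a contradiction from the existence of an infinite hypercentral $p$-section. Getting this reduction to run cleanly inside the Zariski-topological framework of linear groups (controlling connected components and closures of the subgroups involved) is where the care is needed.
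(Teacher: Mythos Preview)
There is a genuine gap, and it is precisely where you yourself flag the difficulty: the finiteness of Sylow $2$-subgroups. Your argument for this does not work. A linear $2$-group can certainly be infinite (e.g.\ a Pr\"ufer $2$-group sits inside $GL(1,\mathbb{C})$), and Proposition~\ref{hyperce} or Lemma~\ref{nilpott} only tell you such a $P$ is finite-by-hypercentral, which an infinite abelian $2$-group already is. Your claim that ``an infinite hypercentral $p$-group has elements with infinite left Engel sinks'' is false if the sinks are taken inside that $p$-group (every element is Engel there), and if the sinks are taken in the ambient $G$, nothing you have written controls them. Lemma~\ref{nach} only identifies left and right sinks in a very special $H\langle a\rangle$ situation with $H$ abelian; it does not let you transport finiteness of $\mathcal{R}(g)$ in $G$ to finiteness conclusions about a $2$-subgroup.

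The paper does \emph{not} prove that Sylow $2$-subgroups are finite. Instead it handles the complementary case head-on: assuming an infinite $2$-subgroup exists, it extracts an infinite abelian $2$-subgroup $A$, uses the descending chain condition on centralizers in linear groups to locate an involution $a\in A$ whose sink $\mathcal{R}(a)$ is centralized by $a$ itself, and then applies the Baer--Suzuki theorem to produce an odd-order element $y$ with $y^{a}=y^{-1}$; by Lemma~\ref{s11} such a $y$ lies in $\mathcal{R}(a)$ yet does not commute with $a$, a contradiction. This centralizer\,/\,Baer--Suzuki step is the decisive idea and is entirely missing from your proposal.

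Two smaller points. First, your periodicity sketch for $\bar G$ is not an argument: $[g,{}_n\,x]$ landing in a finite set says nothing about powers of $g$. The paper instead uses the Tits alternative plus Lemma~\ref{nilpott} to see that every finitely generated subgroup of $G$ is finite-by-hypercentral, whence torsion elements form a subgroup with locally nilpotent quotient, and then reduces to the periodic case via Lemma~\ref{seri} \emph{before} passing to $G/R$. Second, your deduction ``$N_1=1$ since $\bar G$ has no soluble normal subgroup'' is wrong, as $N_1$ need not be soluble; what you actually get is that $\bar G$ is finite-by-soluble, hence $G$ is virtually soluble, which is still enough---but only once the Sylow $2$ issue is resolved.
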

\begin{proof} In view of Proposition \ref{hyperce} it is sufficient to show that $G$ is virtually soluble. By the Zassenhaus theorem a linear group is soluble if and only if it is locally soluble. Therefore it is sufficient to show that $G$ is virtually locally soluble. It is clear that $G$ does not contain a subgroup isomorphic to a nonabelian free group. Hence, by Tits alternative, any finitely generated subgroup of $G$ is virtually soluble. Therefore, by Proposition \ref{hyperce}, any finitely generated subgroup of $G$ is finite-by-hypercentral. Since hypercentral groups are locally nilpotent, it becomes obvious that elements of finite order in $G$ generate a periodic subgroup. Moreover, the quotient of $G$ over the subgroup generated by all elements of finite order is locally nilpotent. Hence, in view of Lemma \ref{seri} the group $G$ is virtually locally soluble if and only if so is the subgroup generated by elements of finite order. Therefore without loss of generality we can assume that $G$ is an infinite periodic group. In view of the Burnside-Schur Theorem $G$ is locally finite. 

Let $R$ be the soluble radical of $G$. This is closed in the Zariski topology and therefore $G/R$ is linear. We can pass to the quotient and without loss of generality assume that $R=1$. In particular, by a result of Gruenberg, $G$ has no nontrivial left Engel elements. Taking into account Proposition \ref{ssss} we deduce that $G$ contains an infinite $2$-subgroup. The latter contains an infinite abelian subgroup $A$ \cite[Exercise 14.4.4]{rob}.  Since $G$ satisfies the minimal condition on centralizers, it follows that $G$ has a subgroup $D$ such that the centralizer $C=C_A(D)$ is infinite while $C_A(\langle D,x\rangle)$ is finite for each $x\in G\setminus D$. Choose an involution $a\in C$. The centralizer $C$ normalizes the finite set $\mathcal R(a)$. Hence, $C$ contains a subgroup of finite index which centralizes $\mathcal R(a)$. Because of minimality it follows that $C$ centralizes $\langle\mathcal R(a),D\rangle$. Since $C$ is infinite, we conclude that $\mathcal R(a)\subseteq D$.  In particular, $a$ centralizes $\mathcal R(a)$. Since $a$ is not an Engel element, the Baer-Suzuki theorem \cite[Theorem 3.8.2]{go} guarantees that there exists an element $y\in G$ of odd order such that $y^a=y^{-1}$. Obviously $\langle y\rangle=[\langle y\rangle,a]$. Thus, Lemma \ref{s11} shows that $y\in\mathcal R(a)$ and $y\not\in C_G(a)$, a contradiction. This completes the proof. 
\end{proof}

The next result is an easy corollary of Theorem \ref{main}.

\begin{corollary}\label{ma} Suppose that all elements in a linear group $G$ are almost right Engel and for every $g\in G$ there is a positive integer $n$ such that $[g,{}_s\, x]\in{\mathcal R}(g)$ whenever $x\in G$ and $s\geq n$. Then $G$ is finite-by-nilpotent.
\end{corollary}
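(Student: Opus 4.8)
The plan is to promote the conclusion "finite-by-hypercentral'' of Theorem~\ref{main} to "finite-by-nilpotent'' by using the uniform threshold to bound the length of the upper central series.

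By Theorem~\ref{main} I may fix a finite normal subgroup $N$ of $G$ with $\bar G:=G/N$ hypercentral; since a finite normal subgroup of $\bar G$ lifts to one of $G$, it suffices to prove $\bar G$ is nilpotent (then $\gamma_{c+1}(G)\subseteq N$). First I transfer the extra hypothesis. Because $\bar G$ is hypercentral it is its own hypercenter, and every element of the hypercenter of a group is a right Engel element; hence every element of $\bar G$ is a right Engel element and $\mathcal R(\bar g)=\{1\}$. Feeding this back with the uniform threshold: for a fixed $\bar x$ the commutators $[\bar g,{}_s\bar x]$ with $s\ge n(g)$ lie in the finite set $\overline{\mathcal R(g)}$ (the image of $\mathcal R(g)$ in $\bar G$), are eventually $1$, and are pairwise distinct until $1$ is reached, so they reach $1$ in at most $|\overline{\mathcal R(g)}|$ steps; thus $[\bar g,{}_s\bar x]=1$ for all $\bar x\in\bar G$ once $s\ge m(g):=n(g)+|\overline{\mathcal R(g)}|$. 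So $\bar G$ is a hypercentral linear group in which every element is a bounded right Engel element, and I must show every such group is nilpotent.

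The connected component $\bar G^{0}$ has finite index and is again hypercentral; since all its upper central terms are Zariski-closed, passing to its Zariski closure (a connected linear algebraic group, whose upper central series terminates after finitely many steps) shows $\bar G^{0}$ is nilpotent, of class bounded in terms of the degree. Write $\bar G=\bar G^{0}\langle b_1,\dots,b_k\rangle$; the finite group $\bar G/\bar G^{0}$ is nilpotent (a quotient of a hypercentral group), so $\gamma_{d+1}(\bar G)\subseteq\bar G^{0}$ with $d$ its class, and it is enough to prove $\bar G^{0}\subseteq Z_{c'}(\bar G)$ for some finite $c'$ (then $\gamma_{d+1+c'}(\bar G)=1$). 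For this, consider the lower central series of $\bar G^{0}$ and its factors $A_i:=\gamma_i(\bar G^{0})/\gamma_{i+1}(\bar G^{0})$, which are $\bar G$-modules on which $\bar G^{0}$ acts trivially; it suffices that $\bar G$ act nilpotently on each $A_i$. Fixing $i$ and an outer generator $b_j$, a short computation shows that for $v\in\gamma_i(\bar G^{0})$ one has, modulo $\gamma_{i+1}(\bar G^{0})$, $[b_j,{}_s(b_jv)]\equiv(1-\tau_j)^{s}(\bar v)$, where $\tau_j$ is the automorphism induced by $b_j$ on $A_i$; the uniform bound $[b_j,{}_s(b_jv)]=1$ for $s\ge m(b_j)$ forces $(1-\tau_j)^{m(b_j)}=0$ on $A_i$, i.e. $\tau_j$ is unipotent of bounded degree. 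Since $\bar G/\bar G^{0}$ is a finite nilpotent group generated by unipotent automorphisms of $A_i$, it acts on $A_i$ through a finite $p$-group (characteristic $p$) or trivially (characteristic $0$), hence nilpotently; chaining this up the lower central series of $\bar G^{0}$ gives $\bar G^{0}\subseteq Z_{c'}(\bar G)$, so $\bar G$ is nilpotent and $G$ is finite-by-nilpotent.

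The decisive point — and where the uniform bound (rather than just Theorem~\ref{main}) is needed — is the passage from "$\tau_j$ topologically unipotent'' (which hypercentrality alone gives) to "$\tau_j$ unipotent''. Without it one cannot exclude a configuration of type $D_{2^{\infty}}\le GL_2(\mathbb{C})$: here $\bar G$ is hypercentral and linear with abelian connected component $C_{2^{\infty}}$, but the reflection $r$ acts on $C_{2^{\infty}}$ by inversion, so $[r,{}_s(rv)]=v^{\pm 2^{s}}$, which is never identically $1$; thus $r$ is not a bounded right Engel element, and indeed $\bar G$ is not nilpotent. I expect the main technical work to be in establishing that a connected hypercentral linear group is nilpotent and in checking the lower-central computation in characteristic $p$; the rest follows from Theorem~\ref{main} and routine commutator calculus.
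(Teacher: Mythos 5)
Your first paragraph is exactly the paper's reduction: pass to the hypercentral quotient modulo the finite normal subgroup from Theorem~\ref{main}, and use the uniform threshold together with the finiteness of the sink and the determinism of the sequence $c_{s+1}=[c_s,\bar x]$ to conclude that each element of $\bar G$ is a right $m(g)$-Engel element with $m(g)$ independent of $\bar x$. That part is correct and is what the paper compresses into ``obviously''. Where you diverge is the finish: the paper simply observes that a right $k$-Engel element has inverse which is left $(k+1)$-Engel and then quotes Gruenberg's theorem \cite{grue} that a linear group all of whose elements are bounded (left) Engel elements is nilpotent. You instead try to reprove this special case of Gruenberg's theorem from scratch, and that is where genuine gaps appear.

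Two steps of your structural argument are not proofs as written. First, the claim that $\bar G^{0}$ is nilpotent is true (it is part of the Gruenberg--Garascuk analysis of locally nilpotent linear groups), but your justification via ``the upper central terms are Zariski-closed, pass to the Zariski closure, whose upper central series terminates after finitely many steps'' does not work: the Zariski closure of a hypercentral group need not be hypercentral, the upper central series of the closure terminating finitely does not mean it reaches the whole group, and at limit ordinals the union of closed terms need not be closed (ascending chains of closed subgroups of an algebraic group do not stabilize in general, e.g.\ $\mu_{2^k}\le\mathbb{G}_m$). The honest proof goes through triangularization of the connected soluble group and the fact that in a locally nilpotent linear group semisimple parts centralize unipotent elements --- i.e.\ exactly the material of \cite{grue} that the paper cites. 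Second, the step ``$\bar G/\bar G^{0}$ is a finite nilpotent group generated by unipotent automorphisms of $A_i$, hence acts nilpotently'' is unjustified: a group generated by unipotent automorphisms need not act nilpotently (think of $SL_2(\mathbb{F}_p)$ generated by two transvections), so you must use that \emph{every} element acts unipotently of bounded degree, and even then the conclusion requires an argument tied to the torsion/characteristic structure of the sections $A_i$, which are abstract abelian groups rather than vector spaces, so ``finite $p$-group in characteristic $p$ or trivial in characteristic $0$'' needs proof. Both gaps disappear, and the argument becomes the paper's one-line conclusion, if after obtaining the bounded right Engel condition you simply invert and invoke Gruenberg's theorem; your $D_{2^\infty}$ discussion correctly identifies why hypercentrality alone is insufficient, but it does not repair these two steps.
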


\begin{proof} Theorem \ref{main} states that $G$ is finite-by-hypercentral. Passing to a quotient over a finite normal subgroup we can assume that $G$ is hypercentral. Then obviously for each element $x$ of $G$ there is a positive integer $k$ depending on $x$ such that $x$ is right $k$-Engel. It follows that $x^{-1}$ is left $(k+1)$-Engel. By a result of Gruenberg \cite{grue}, $G$ is nilpotent.
\end{proof}

\end{document}